\newtheorem{tm}{Theorem}
\newtheorem{rem}[tm]{Remark}
\newtheorem{lm}[tm]{Lemma}
\newtheorem{ex}[tm]{Example}
\newtheorem{prop}[tm]{Proposition}
\newtheorem{problem}[tm]{Problem}
\newtheorem{conj}[tm]{Conjecture}
\newcommand{\be}{\beta}
\newcommand{\si}{\sigma}
\newcommand{\bR}{\mathbb R}
\newcommand{\bZ}{\mathbb Z}
\newcommand{\D}{\mathcal D}
\newcommand{\eps}{\epsilon}
\newcommand{\bsi}{{\bar \si}}
\newcommand{\tsi}{{\widetilde \si}}
\begin{document}
\title{Could Ren\'e Descartes have known this?}

\author[J.~Forsg\aa rd]{Jens Forsg\aa rd}
\noindent 
\address{Department of Mathematics, Stockholm University, SE-106 91
Stockholm,
         Sweden}
\email{jens@math.su.se}

\author[V.~Kostov]{Vladimir P. Kostov }
\noindent 
\address{Universit\'e de Nice, Laboratoire de Math\'ematiques, 
Parc Valrose, 06108 Nice Cedex 2, France}
             \email{kostov@math.unice.fr}

\author[B.~Shapiro]{Boris Z. Shapiro}
\noindent 
\address{Department of Mathematics, Stockholm University, SE-106 91
Stockholm,
         Sweden}
\email{shapiro@math.su.se}

\subjclass[2010] {Primary 26C10, Secondary 30C15}

\keywords{standard discriminant, Descartes rule of signs}

\begin{abstract}
Below  we discuss the partition of the space of real 
univariate polynomials according to the number of positive and negative  roots and signs of the coefficients.  We present several series of non-realizable combinations of signs together with the numbers of positive and negative roots.   We provide a detailed information   about possible non-realizable  combinations  as above up to degree $8$ as well as a general conjecture about such combinations. 
\end{abstract}

\date{}
\maketitle 

\dedicatory{Cogito ergo sum \quad (I think, therefore I am)}\footnote{The father of  modern philosophy and mathematician Ren\'e Descartes (in latin Renatus Cartesius) who spent most of his life in the Dutch Republic, died on 11 February 1650 in Stockholm, Sweden. He had been invited by Queen Christina of Sweden to tutor her. The cause of death was said to be pneumonia. One theory claims that accustomed to working in bed until noon, he may have suffered damage to his health from Christina's study regime, which began early in the morning at 5 a.m. 

Another theory says that  he might have been poisoned with arsenic for the following reason. At this time Queen Christina had intention  to convert to Catholicism and later she actually did that and abdicated her throne as Swedish law requires a Protestant ruler. The only Catholic with whom she had prolonged contact had been Descartes which might have caused the intense hatred by the Swedish Protestant clergy. On the other hand, another lead says that he might have been poisoned by a local Catholic priest who was afraid that Descartes radical religous ideas might interfere with Christina's intention to convert. In any case already in 1663, the Pope placed his works on the Index Librorum Prohibitorum (Index of Prohibited Books).

As a Catholic in a Protestant nation, he was interred in a graveyard used mainly for unbaptized infants in Adolf Fredriks kyrka in Stockholm. Later, his remains were taken to France and buried in the Abbey of Saint-Germain-des-Pr\'es in Paris. Although the National Convention in 1792 had planned to transfer his remains to the Panth\'eon, they are, two centuries later, still resting  in a chapel of the abbey. 

As if all that was not enough already, it seems that Descartes' body was shipped without his head due to the small size of the coffin  sent for him from France, and his cranium  was delivered to "Muse\'e de l'Homme`` in Paris only in early 19-th century by the famous Swedish chemist J.~Berzelius who bought it earlier  on an auction in Stockholm! One wonders if there is any reasonable evidence behind any of these  myths?
}

\section{Introduction}

The famous Descartes' rule of signs claims that the number of positive roots of a real univariate polynomial does not exceed the number of sign changes in its sequence of coefficients. In what follows we only consider polynomials with all non-vanishing coefficients. An arbitrary ordered sequence 
$\bsi=(\si_0,\si_1,...,\si_d)$ of $\pm$-signs  is called a {\em sign pattern}. Given a sign pattern ${\bsi}$ as above, we call by its {\em Descartes pair} $(p_\bsi,n_\bsi)$ the pair of non-negative integers counting sign changes and sign preservations of $\bsi$.  The Descartes pair of $\bsi$ gives the upper bound on the number of positive and negative roots respective of any polynomial of degree $d$ whose signs of coefficients are given by   $\bsi$. (Observe that, for any $\bsi$, 
$p_\bsi+ n_\bsi=d$.) To any polynomial $q(x)$ with the sign pattern $\bsi,$ we associate  the pair $(pos_q,neg_q)$ giving the numbers of its positive and negative roots (counting multiplicities). Obviously $(pos_q,neg_q)$  satisfies the standard restrictions
 \begin{equation}\label{stand} 
pos_q\le p_\bsi,\;  pos_q\equiv p_\bsi (mod\, 2),\; neg_q\le n_\bsi,\; neg_q\equiv n_\bsi (mod\, 2).
\end{equation}

We call pairs $(pos, neg)$ satisfying \eqref{stand} {\em admissible} for $\bsi$.  Conversely, for a given pair  $(pos, neg),$ we call a sign pattern $\bsi$ such that \eqref{stand} is satisfied {\em admitting} the latter pair. 
 It turns out that not for every pattern $\bsi,$ all its admissible  pairs  $(pos,neg)$  are realizable by polynomials with the sign pattern $\bsi$. Below we address this very basic question. 

\begin{problem}
For a given sign pattern $\bsi,$  which admissible pairs $(pos,neg)$ are realizable by polynomials whose signs of coefficients are given by  $\bsi$?  
\end{problem}

Consider the (affine) space $Pol_d$ of all real monic univariate polynomials of degree $d$ and define  the {\it standard real discriminant}  $\D_d\subset Pol_d$ as the subset of all polynomials having a real multiple root.  Detailed information about a natural stratification of $\D_d$ can be found in e.g., \cite {KhS}.  It is a  well-known and simple fact that $Pol_d\setminus \D_d$ consists of $\left[\frac{d}{2}\right]+1$ components distinguished  by the number of real  simple roots. Moreover, each such component is contractible. Strangely enough analogous  statements in the case when  one imposes  additional restrictions on the signs of coefficients seems to be unknown.  When working with monic polynomials we will mainly use their shortened signed patterns $\tsi$ representing the signs of all coefficients but the leading term; for the actual sign pattern $\bsi,$ we then write $\bsi=(1,\tsi)$ to emphasize that we consider  monic polynomials.

\medskip
To formulate our  results we need to introduce some notation.  For any pair $(d,k)$ 
of non-negative integers with $d-k\ge 0;\; d-k\equiv 0\text{\; mod\;} 2,$ denote by $Pol_{d,k}$,
the set of all monic real polynomials of degree $d$ with $k$ real  simple roots.  
Denote by $Pol^{\bsi}_d\subset Pol_d$ the set (orthant) of all polynomials $p=x^d+a_1x^{d-1}+...+a_d$ whose coefficients $(a_1,...,a_d)$ have the (shortened) sign pattern $\tsi=(\si_1,...,\si_d)$ respectively. Finally, set 
$Pol_{d,k}^\tsi=Pol_{d,k}\cap Pol_d^\tsi$. We have the natural action of $\bZ_2\times \bZ_2$ on the space of monic polynomials and on the set of all sign patterns respectively.  The first generator acts by reverting the signs of all monomials of odd degree 
(which for polynomials means $P(x)\to (-1)^dP(-x)$); 
the second generator acts reading the pattern backwards 
(which for polynomials means $P(x)\to x^dP(1/x)$). (To preserve the set of monic polynomials one has to divide $x^dP(1/x)$ by its leading term.)  We will refer to the latter action as the  {\em standard $\bZ_2\times \bZ_2$-action}.  
(Up to some trivialities) the properties we will study below are invariant  under the standard $\bZ_2\times \bZ_2$-action. 

\medskip
We start with the following simple result (which should be known in the literature).  

\begin{tm} \label{th:real} {}
 
{\rm (i)}   If  $d$ is even, then $Pol_{d,0}^\tsi$ is nonempty  if and only if  $\si_d=+$ (i.e. the constant term is positive). 

\noindent
{\rm (ii)}    For any pair of positive integers $(d,k)$ with $d-k\ge 0$ and $d-k\equiv 0\text{\; mod\;} 2$  and any sign pattern $\tsi=(\si_1,...,\si_d),$  the set $Pol_{d,k}^\tsi$ is nonempty. 

\end{tm}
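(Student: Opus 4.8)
The plan is to prove the necessity in (i) by a one-line positivity remark, and to obtain all the existence statements by exhibiting explicit products of linear and quadratic factors whose moduli live on widely separated scales, so that the sign of every coefficient is governed by a single dominant term; the intermediate values of $k$ are then filled in by a convexity/deformation argument.

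\emph{Necessity in} (i). If $d$ is even and $p\in Pol_{d,0}$, then $p$ is monic of even degree with no real zero, so $p(x)\to+\infty$ as $x\to\pm\infty$ while $p$ never vanishes; hence $p>0$ on all of $\bR$, and in particular $\si_d=\operatorname{sign}p(0)=+$. Thus $Pol_{d,0}^{\tsi}=\emptyset$ as soon as $\si_d=-$.

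\emph{The workhorse and the case $k=d$.} Choose reals $r_1,\dots,r_d$ with $|r_1|\gg|r_2|\gg\cdots\gg|r_d|>0$ and put $p=\prod_{j=1}^d(x-r_j)$, so that $a_j=(-1)^je_j(r_1,\dots,r_d)$. When the moduli are separated enough, $e_j$ is dominated by the product $r_1\cdots r_j$ of its $j$ largest-modulus roots, so $\operatorname{sign}a_j=(-1)^j\prod_{i\le j}\operatorname{sign}(r_i)$; equivalently the $j$-th coefficient changes sign relative to its predecessor exactly when $r_j>0$. Reading the target pattern $\bsi$ from the leading term and choosing $\operatorname{sign}(r_j)$ to be $+$ or $-$ according as the $j$-th step of $\bsi$ is a sign change or a sign preservation then produces $d$ distinct simple real roots with sign pattern exactly $\bsi$. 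This already gives $Pol_{d,d}^{\tsi}\neq\emptyset$ for every $\tsi$ (the case $k=d$ of (ii)) and, more importantly, places an all-real representative inside every coefficient orthant. To reduce the number of real roots one replaces some widely separated linear factors by factors $x^2+b_jx+c_j$ with $c_j>0$ and $b_j^2<4c_j$; such a factor has no real root and a positive product of roots, and among separated factors it behaves as a block carrying the local pattern $(+,\operatorname{sign}b_j,+)$. With $k$ linear and $(d-k)/2$ such quadratic blocks one realizes, with exactly $k$ simple real roots, every pattern that can be assembled step by step from these blocks. Finally, to obtain the intermediate $k$ inside a fixed orthant I would use that the orthant is an open convex cone: a generic path from the all-real representative ($k=d$) to a representative with the fewest real roots crosses $\D_d$ transversally, so the number of simple real roots drops by two at each crossing and every admissible $k$ is met. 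This reduces the whole theorem to producing, in each orthant, a representative with the \emph{minimal} number of real roots, namely $0$ for even $d$ (part (i)) and $1$ or $2$ for the remaining cases.

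\emph{Main obstacle.} The delicate point is precisely this extremal case. Separated quadratic blocks force, in the no-real-root situation, every even-indexed coefficient to be positive, so a pattern demanding a negative even-indexed coefficient — the smallest instance being $\tsi=(-,-,+,+)$ in degree $4$ — is invisible to scale separation. Such patterns I would reach by letting two quadratic factors have comparable scales and opposite, nearly critical middle coefficients; the model case is
\[
\Bigl(x^2-2x+\tfrac{6}{5}\Bigr)\Bigl(x^2+x+\tfrac{3}{10}\Bigr)=x^4-x^3-\tfrac12x^2+\tfrac35x+\tfrac{9}{25},
\]
a product of two positive definite quadratics with sign pattern $(+,-,-,+,+)$. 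The genuine work, then, is to prove that \emph{every} admissible pattern (for (i) every pattern with $\si_d=+$, for (ii) every pattern) can be assembled from linear factors, separated quadratic factors and comparable-scale quadratic pairs, with the correct total count of real and complex factors and without the comparable-scale blocks disturbing the signs of the remaining coefficients. This combinatorial realization of the extremal cases is the heart of the matter; the standard $\bZ_2\times\bZ_2$-action can be invoked to cut down the number of patterns to be treated by hand, after which the convexity step supplies all remaining values of $k$ routinely.
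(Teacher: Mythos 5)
Your overall architecture is sound and genuinely different from the paper's: the necessity half of (i) is the same one-liner, but for existence the paper proves (ii) by induction on $(d,k)$, concatenating with factors $x\pm 1$ and $x^2\pm 2x+2$ via Lemma~\ref{lm:concat} (your scale-separated product realizing every pattern with $k=d$ is essentially Grabiner's construction, cited in the paper, and your separated quadratic blocks are the same mechanism as that lemma), while your interpolation of the intermediate values of $k$ by a generic path crossing $\D_d$ transversally inside the open convex orthant is a legitimate alternative to the induction and does work: each generic crossing changes the number of simple real roots by exactly $\pm 2$, so every admissible $k$ between $k_{\min}$ and $d$ is attained by a polynomial with simple real roots and the prescribed signs. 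The genuine gap is exactly where you say the ``genuine work'' lies: your reduction leaves unproved the production, in each orthant, of a representative with the \emph{minimal} number of real roots, and your proposed remedy --- assembling patterns from linear factors, separated quadratics and comparable-scale quadratic pairs --- is supported only by one degree-$4$ model example plus a program. You give no argument that such comparable-scale pairs can always be inserted with the required count of factors and without contaminating the signs of the remaining coefficients, so as written the proposal does not prove part (i)'s sufficiency (the case $k=0$) nor the bases $k=1,2$ of your interpolation.

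The irony is that this extremal case is the trivial part, and the paper dispatches it by moving only the constant term. Write $p=q+a_d$ where $q$ collects all higher terms with the prescribed signs. If $d$ is even and $\si_d=+$, increase $a_d$ until $p$ exceeds the (finite) infimum of $-q$, so the graph lies strictly above the $x$-axis: this gives $k=0$ with the pattern intact. If $d$ is odd, push $a_d$ to $+\infty$ or $-\infty$ according to $\si_d$; for $|a_d|$ large the equation $q(x)=-a_d$ has exactly one (simple) real solution, giving $k=1$. If $d$ is even and $\si_d=-$, send $a_d\to-\infty$; once $-a_d$ exceeds all critical values of $q$, the level set $q(x)=-a_d$ consists of exactly two simple points, giving $k=2$. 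Plugging these one-line constructions into your reduction closes the gap completely and yields a correct proof along your route, with no block-assembly combinatorics needed; absent them, the proposal is an incomplete program precisely at the cases it identifies as the heart of the matter.
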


Observe that in general, the intersection  $Pol_{d,k}^\tsi$ does not have to be connected. The total  number $k$ of real zeros can be distributed between $m$ positive and $n$ negative in different ways satisfying the inequalities $m+n=k$, $m\le p_{\tsi},\; n\le n_{\tsi}$ and $m\equiv p_{\tsi} \mod 2$, $n\equiv n_{\tsi} \mod 2$. See examples below. 

On the other hand, some concrete intersections have to be connected. In particular, the following holds. 

\begin{prop}\label{lm:hypell}

\noindent 
{\rm (i)} For any $d$ and $\tsi$, the  intersections $Pol_{d,d}^\tsi$ and $Pol_{d,0}^\tsi$ are contractible. (The latter intersection is  empty for $d$ odd.)

\noindent 
{\rm (ii)} 
For the (shortened) sign pattern $\hat {+}=(+,+,\dots,+)$ consisting of all pluses,  the intersection $Pol_{d,k}^{\hat {+}}$ is contractible for any $k\le d,\; k\equiv d \mod 2$. 
  (The same holds for the shortened alternating sign pattern $(-,+,-,\dots)$.)

\noindent 
{\rm (iii)} For any sign pattern $\bsi=(1,\tsi)$ with just one sign change, all intersections $Pol^{\bsi}_{d,k}$ are  non-empty. For $k=d$ (which is the case of real-rooted polynomials having one positive and $d-1$ negative roots) this intersection  is  contractible.
\end{prop}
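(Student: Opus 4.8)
The plan is to treat the three parts with different but related tools, working throughout in \emph{root coordinates} and using the standard $\bZ_2\times\bZ_2$-action to reduce the number of cases. Recall the classical sharpening of Descartes' rule: a real-rooted polynomial has \emph{exactly} $p_\tsi$ positive and $n_\tsi$ negative roots. Hence a polynomial in $Pol_{d,d}^\tsi$ is determined by an ordered tuple of $p:=p_\tsi$ positive roots $0<s_1<\dots<s_p$ together with $n:=n_\tsi$ negative roots, the root map being a homeomorphism onto the open convex cone of such configurations; inside this cone the stratum $Pol_{d,d}^\tsi$ is the locus where, in addition, each coefficient $a_j=(-1)^je_j$ carries the prescribed sign $\si_j$.

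For $Pol_{d,0}^\tsi$ (nonempty only for $d$ even) I would argue by pure convexity. For even $d$ a monic polynomial has no real roots iff it is everywhere positive, and the set of everywhere-positive monic polynomials of degree $d$ is convex: a convex combination of two monic polynomials is again monic, and a convex combination of positive values is positive. Intersecting this convex set with the open orthant $Pol_d^\tsi$, itself convex and stable under convex combinations since strict signs are preserved, yields a convex — hence contractible — set, nonempty exactly when $\si_d=+$ by Theorem~\ref{th:real}(i). This disposes of the $Pol_{d,0}^\tsi$ half of (i).

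The one-sign-change situations of (iii) I would handle by an interval fibration built on Newton's inequalities. Here $\tsi=(+^a,-^b)$ with $a+b=d$ and $b\ge 1$; for $k=d$ I factor $P(x)=(x-s)Q(x)$ with $Q(x)=\prod_{i=1}^{d-1}(x+u_i)=x^{d-1}+c_1x^{d-2}+\dots+c_{d-1}$, all $c_i=e_i(\mathbf u)>0$ and $c_0=1$. Then $a_i=c_i-sc_{i-1}$, so $a_i>0$ iff $s<c_i/c_{i-1}$ and $a_d=-sc_{d-1}<0$ automatically; the pattern $(+^a,-^b)$ is therefore equivalent to $\max_{i>a}(c_i/c_{i-1})<s<\min_{i\le a}(c_i/c_{i-1})$. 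Since $Q$ is real-rooted, Newton's inequalities give $c_i^2>c_{i-1}c_{i+1}$, so the ratios $c_i/c_{i-1}$ strictly decrease and the bounds become $c_{a+1}/c_a<s<c_a/c_{a-1}$; the defining interval is thus nonempty for \emph{every} configuration of the $u_i$. Consequently $Pol_{d,d}^\bsi$ is the region $\{(\mathbf u,s):\alpha(\mathbf u)<s<\beta(\mathbf u)\}$ over the contractible configuration space of the $u_i$, with $\alpha<\beta$ continuous, which is homeomorphic to (configuration space)$\times$(interval) and hence contractible. For the nonemptiness of $Pol_{d,k}^\bsi$ when $k<d$ I would take such an all-real example and collide $d-k$ of the negative roots in $(d-k)/2$ pairs, splitting each into a complex-conjugate pair by an arbitrarily small perturbation; as every coefficient is nonzero its sign survives, producing the required polynomial with exactly $k$ real roots.

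The hard part will be the contractibility of $Pol_{d,d}^\tsi$ for a \emph{general} pattern $\tsi$ (the remaining half of (i)) and of $Pol_{d,k}^{\hat+}$ for $k<d$ in (ii). For general $\tsi$ I would factor $P=A\cdot B$ into its positive-root and negative-root parts and pass to coordinates $E_i=e_i(\mathbf s)$, $F_j=e_j(\mathbf u)$, which range over the two contractible Newton regions of elementary-symmetric tuples of distinct positive reals; there $a_m=\sum_{i+j=m}(-1)^iE_iF_j$ is multilinear and the stratum is cut out by the bilinear conditions $\si_m a_m>0$. Unlike the one-sign-change case these cells are genuinely non-convex — already for $d=4$, $p=n=2$ the coefficient $a_2$ is an indefinite quadratic in the roots — so neither convexity nor a single-parameter fibration is available, and one must instead construct an explicit deformation retraction onto a distinguished polynomial through sign-pattern-preserving paths, or induct on $d$ by freezing the root of largest modulus. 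For (ii) the extra difficulty is that positivity of the coefficients of a product is not inherited factor-by-factor once complex roots are present, so the retraction must move the complex-conjugate pairs while keeping every coefficient of $P$ positive. Showing that such a retraction exists — equivalently, that these semialgebraic cells carry no topology beyond a point — is where the real work lies.
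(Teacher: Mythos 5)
Your proposal proves only part of the proposition: the decisive gap is that the two hardest assertions are left unestablished, namely the contractibility of $Pol_{d,d}^{\tsi}$ for an \emph{arbitrary} sign pattern (the first half of (i)) and the contractibility of $Pol_{d,k}^{\hat{+}}$ for $0<k<d$ (part (ii)). Your final paragraph merely lists candidate strategies (an explicit sign-preserving retraction, or induction by freezing the root of largest modulus) and concedes that this ``is where the real work lies''; your own observations (the cells are non-convex already for $d=4$, and coefficient positivity of a product is not inherited factor-by-factor once complex roots appear) show exactly why those sketches do not close the argument. For comparison, the paper settles (i) in two steps: path-connectedness of $Pol_{d,d}^{\tsi}$ is Lemma~\ref{lm:Jens}, proved by deforming any real-rooted polynomial, via polar derivatives $p(x)+\frac{x}{\alpha}p'(x)$ and induction on $d$, into the set of sign-independently real-rooted polynomials, which is logarithmically convex by \cite{PRS}; then each connected component is contracted by a ``Property A'' argument: sections of $Pol_{d,d}^{\tsi}$ by lines parallel to a coordinate axis are points or intervals with endpoints depending continuously on the remaining coefficients (perturbing a single coefficient in the family $Q+\lambda x^a$, once one leaves the real-rooted locus one never returns), and replacing each section by its midpoint, one coordinate after another, contracts the component to a point. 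For (ii) the paper uses a shift trick: $p(x)\mapsto p(x+t)$ preserves $Pol_{d,k}^{\hat{+}}$ for $t>0$, and for $t$ large all roots of a given compact family acquire very negative real parts, after which the family contracts along straight segments $\tau\tilde p+(1-\tau)p$ inside the stratum. Some argument of comparable substance is needed in your write-up; without it, (i) and (ii) are unproven.

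The parts you do prove are correct, and one is genuinely nicer than the paper's. The convexity argument for $Pol_{d,0}^{\tsi}$ coincides with the paper's. For (iii) with $k=d$, your fibration argument---write $P=(x-s)\prod_i(x+u_i)$, note $a_i=c_i-sc_{i-1}$, and use the strict log-concavity $c_i^2>c_{i-1}c_{i+1}$ (Newton's inequalities sharpened by the binomial factor) to see that the admissible $s$ form a nonempty open interval $\left(c_{a+1}/c_a,\ c_a/c_{a-1}\right)$ over the contractible configuration space of the $u_i$---is cleaner and more quantitative than the paper's continuity argument with the coefficients $c_j=a_j-ba_{j-1}$, and it yields non-emptiness for free. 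One small repair is needed in your non-emptiness step for $k<d$: you cannot simply ``collide'' negative roots of an arbitrary all-real example, since coefficients vary along the motion and the sign pattern could break en route; but your own interval bound persists for configurations with repeated $u_i$ (the binomial factor keeps the ratio inequality strict even at collisions), so you may choose the doubled configuration directly, pick $s$ in the still-nonempty interval, and then split each double root into a complex-conjugate pair by a small perturbation, which preserves all coefficient signs. (Alternatively, non-emptiness of $Pol_{d,k}^{\bsi}$ already follows from Theorem~\ref{th:real}(ii), which is essentially how the statement is organized in the paper.)
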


\medskip
Concerning non-realizable combinations of $\bsi$ and $(pos,neg),$ we have the following two statements. 

\begin{prop}\label{prop:even} For  $d$ even, consider patterns satisfying the conditions: the sign of
the constant term (i.e., last entry) is $+$; the signs of all odd monomials are $+$; among the remaining signs of even monomials  
there are $l\ge 1$ minuses (at arbitrary  positions).  Then, for any such sign pattern, the 
pairs $(2,0), (4,0), \dots , (2l,0),$ and only they, are non-realizable. (Using the standard 
$\bZ_2\times \bZ_2$-action one obtains more such examples.) 
\end{prop}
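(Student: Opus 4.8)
The plan is to first determine the admissible pairs, then dispatch the non-realizability of $(2,0),\dots,(2l,0)$ by a short even--odd argument, and finally to realize every remaining admissible pair. First I would record the Descartes pair. Each minus occurs at an even-degree monomial, so its two neighbours are odd-degree monomials, which by hypothesis (together with the leading and the constant terms) all carry a $+$; hence every one of the $l$ minuses is flanked by two pluses, no two minuses are adjacent, and each contributes exactly two sign changes. Thus $p_\bsi=2l$ and $n_\bsi=d-2l$, so the admissible pairs are precisely the $(pos,neg)$ with $pos\in\{0,2,\dots,2l\}$ and $neg\in\{0,2,\dots,d-2l\}$; the listed pairs $(2,0),\dots,(2l,0)$ are exactly the admissible ones with $neg=0$ and $pos\ge 2$.

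For the non-realizability I would use the even--odd decomposition. Writing a polynomial $p\in Pol^{\bsi}_d$ as
\[
p(x)=h(x^2)+x\,g(x^2),
\]
the coefficients of $h$ are the even-degree coefficients of $p$ and those of $g$ the odd-degree ones; by hypothesis all coefficients of $g$ are positive, so $g(u)>0$ for $u\ge 0$. Suppose $neg_p=0$. Since $p(0)=a_d>0$ and $p(-t)\to+\infty$ as $t\to+\infty$ (the degree is even with positive leading coefficient), the absence of negative roots forces $p(-t)>0$ for all $t\ge 0$; as $p(-t)=h(t^2)-t\,g(t^2)$, this gives $h(t^2)>t\,g(t^2)\ge 0$, and in particular $h(t^2)>0$, for every $t>0$. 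Then $p(t)=h(t^2)+t\,g(t^2)>0$ for all $t>0$, so $pos_p=0$. Therefore $neg_p=0$ already forces $pos_p=0$, and none of $(2,0),\dots,(2l,0)$ is realizable.

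To realize the remaining pairs I would exploit that the same decomposition decouples the sign constraint from the root count: any monic $h$ of degree $d/2$ carrying the prescribed even-degree signs, together with any $g$ of degree $d/2-1$ with all positive coefficients, yields via $p(x)=h(x^2)+x\,g(x^2)$ a polynomial with exactly the pattern $\bsi$. Here the positive, resp.\ negative, roots of $p$ are the solutions $t>0$ of $h(t^2)=-t\,g(t^2)$, resp.\ $h(t^2)=t\,g(t^2)$. Taking $g=\eps\tilde g$ with $\tilde g$ fixed positive and $\eps$ small, the roots of $p$ approach those of $h(x^2)$; a simple positive root $u_0$ of $h$ produces the symmetric pair $\pm\sqrt{u_0}$, whereas a double positive root splits independently at $+\sqrt{u_0}$ and at $-\sqrt{u_0}$, because the perturbing term $\pm t\,g(t^2)$ has opposite signs there. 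Tuning the number and multiplicities of the positive roots of $h$ this realizes the admissible pairs whose total number of real roots is not too large (roughly $pos+neg\le 4l$), and in particular, by the $pos\leftrightarrow neg$ symmetry of the construction, every admissible pair with $pos=0$.

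The hard part will be the admissible pairs with many real roots, i.e.\ with $neg$ close to $d-2l$, where $g$ can no longer be treated as a perturbation and the sign-change count of $h$ is by itself too small to account for all the roots. For these I would work instead with $p(-x)$, whose sign pattern is $\bsi$ with the odd-degree signs flipped and whose positive roots are the negative roots of $p$; realizing the required large number of them is a maximal realization for the reversed pattern, which I would reach by an induction on $d$ seeded by the nonemptiness in Theorem~\ref{th:real}(ii), followed by a continuity argument deforming the configuration to the exact split $(pos,neg)$ while keeping all coefficients nonzero. Controlling this exact distribution of the real roots between the two half-lines under the deformation is the crux of the construction.
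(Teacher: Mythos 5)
Your first two paragraphs are fine: the Descartes-pair count is correct, and the non-realizability argument via $p(x)=h(x^2)+x\,g(x^2)$ is valid and is essentially the paper's argument in contrapositive form (the paper notes $P(-a)<P(a)$ for $a>0$ because the odd coefficients are positive, so a sign change on the positive axis forces one on the negative axis; your version, that $neg_p=0$ forces $p(-t)>0$, hence $h(t^2)>t\,g(t^2)$, hence $p(t)>0$, is the same inequality and if anything handles even-multiplicity positive roots more cleanly).

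The realizability half, however, has a genuine gap, and it is larger than you acknowledge. In the small-$g$ regime every real root of $p$ must come from a positive root of $h$, so $pos_p+neg_p$ is bounded by twice the number of positive roots of $h$ counted with multiplicity, hence by twice the number of sign changes of $h$. Crucially, that number is $2\times(\text{number of \emph{blocks} of adjacent minuses in the even subsequence})$, not $2l$: the minuses are isolated in the full pattern of $p$ but can be adjacent as coefficients of $h$. For instance, for $d=8$ and $\bsi=(1,+,-,+,-,+,+,+,+)$ one has $l=2$ but $h$ has sign pattern $(+,-,-,+,+)$ with only two sign changes, so your construction caps at $pos_p\le 2$ and can never reach the admissible (and, by the Proposition, realizable) pair $(4,2)$. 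For the same reason the claim that the construction yields ``every admissible pair with $pos=0$'' fails: realizing $(0,2t)$ perturbatively needs $t$ double positive roots of $h$, forcing $t\le l$, while $t$ ranges up to $d/2-l$. Your closing paragraph concedes exactly this and proposes induction on $d$ seeded by Theorem~\ref{th:real}(ii) plus a continuity argument — but Theorem~\ref{th:real}(ii) controls only the \emph{total} number of real roots, not the split into positive and negative, and ``controlling this exact distribution\dots is the crux'' is an admission that the hard step is missing, not a proof of it. The paper fills this gap with a concrete inductive concatenation algorithm (Lemma~\ref{lm:concat} and Example~\ref{ex:imp}): one strips the final $+$ with $P_2(x)=x+1$, then grows the pattern from $(1,+)$ two entries at a time, marking $s$ of the $l$ minuses and multiplying by $(x-1)$ twice at marked minuses (adding two positive roots), by $x^2-2x+2$ at unmarked minuses (adding a complex pair), and by $(x+1)$ twice or $x^2+2x+2$ at plus--plus endings, which realizes every pair $(2s,2t)$ with $0<s\le l$, $0<t\le d/2-l$ exactly and with full bookkeeping. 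Some such mechanism for gluing in roots with prescribed signs while preserving the pattern is what your proposal lacks.
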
 

\begin{problem}
 Does there exist a version of Proposition~\ref{prop:even} for odd degree $d$?
\end{problem}


\begin{prop}\label{prop:3parts} Consider a sign pattern $\bsi$ consisting of $m$ consecutive pluses (including the leading $1$) 
followed by $n$ consecutive minuses and then by $p$ consecutive pluses, where $m+n+p=d+1.$ Then 

\noindent 
{\rm (i)} for the pair $(0,d-2),$ this sign pattern is not realizable if 
\begin{equation}\label{eq:kappa}
\kappa=\frac{d-m-1}{m}\cdot \frac{d-p-1}{p} \ge 4;
\end{equation}

\noindent 
{\rm (ii)} the sign pattern $\bsi$ is realizable with any pair of the form $(2,v)$. 
\end{prop}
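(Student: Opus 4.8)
\emph{Part (i): a squeeze between two bounds.} The plan is to assume that some monic $P$ realizes $(0,d-2)$ with the given three-block pattern and to contradict $\kappa\ge 4$. Since $P$ has no positive root, exactly $d-2$ negative roots, and hence one genuinely non-real conjugate pair (a zero root is excluded, all coefficients being nonzero), I would factor
\[
  P(x)=(x^2-2\alpha x+c)\,Q(x),\qquad Q(x)=\prod_{i=1}^{d-2}(x+r_i)=\sum_{j=0}^{d-2}b_j\,x^{d-2-j},
\]
where $r_i>0$, all $b_j>0$ with $b_0=1$, and $c>\alpha^2$ (the pair is complex). Expanding gives the three-term relation $a_i=b_i-2\alpha b_{i-1}+c\,b_{i-2}$ for the coefficients of $P$, with $b_j=0$ for $j\notin\{0,\dots,d-2\}$. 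The first negative coefficient $a_m<0$ already forces $\alpha>0$.

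\emph{Extracting the upper bound.} Working with the consecutive ratios $u_i:=b_i/b_{i-1}$, I would read off two inequalities from the two sign changes. Dropping the nonnegative term $c\,b_{m-2}$ in $a_m<0$ gives $b_m<2\alpha b_{m-1}$, that is $u_m<2\alpha$; dropping the nonnegative term $b_{m+n-1}$ in $a_{m+n-1}<0$ gives $c\,b_{m+n-3}<2\alpha b_{m+n-2}$, hence $c<2\alpha\,u_{m+n-2}$, and combined with $\alpha^2<c$ (dividing by $\alpha>0$) this yields $\alpha<2\,u_{m+n-2}$. Chaining the two bounds on $\alpha$ produces the clean inequality
\[
  \frac{u_m}{u_{m+n-2}}<4 .
\]

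\emph{The lower bound and the sharp threshold.} The opposite estimate should come from $Q$ having only real negative roots. By Newton's inequalities the normalized ratios $u_i\cdot\frac{i}{N-i+1}$ (with $N=d-2$) are non-increasing in $i$, so telescoping from $i=m$ to $i=m+n-2$ collapses to exactly
\[
  \frac{u_m}{u_{m+n-2}}\ \ge\ \frac{(N-m+1)(m+n-2)}{m\,(N-m-n+3)}=\frac{(d-m-1)(d-p-1)}{mp}=\kappa,
\]
using $m+n+p=d+1$. Together with the previous display this forces $\kappa<4$, the desired contradiction. (In the extreme case $m=p=1$ this lower bound is just the classical $(\sum r_i)(\sum 1/r_i)\ge (d-2)^2$ from Cauchy--Schwarz.) The step I expect to be the real obstacle is precisely this one: \emph{recognizing} that the telescoped Newton product simplifies to $\kappa$, which is what pins the sharp constant $4$. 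One must also dispose of the degenerate index ranges $m=1$ or $p=1$, where the dropped terms vanish identically, and note that $\kappa\ge 4$ forces $n\ge 2$, so all the indices above are meaningful.

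\emph{Part (ii): construction and deformation.} For the pair $(2,d-2)$ nothing needs to be built: $Pol_{d,d}^{\tsi}$ is nonempty by Proposition~\ref{lm:hypell}(i), and a polynomial with $d$ real roots and this pattern has at most $n_{\bsi}=d-2$ negative roots, hence exactly $2$ positive and $d-2$ negative. To reach $(2,v)$ for $v<d-2$, I would take such a real-rooted realizer having two almost coincident negative roots and replace the factor $(x+r)(x+r')$ by a nearby genuinely complex factor $x^2+\beta x+\gamma$ with $\beta,\gamma>0$ and $\beta^2<4\gamma$. Since the coefficients of $P$ are affine in $(\beta,\gamma)$ and strictly signed to begin with, a sufficiently small such move keeps the pattern $\bsi$ while dropping the negative-root count by two; iterating reaches every admissible $v$. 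Here the point requiring care is sign-preservation along the deformation together with the existence of a realizer having a clustered negative pair, both secured by the openness of the strict-sign conditions, which leaves room to keep every coefficient away from zero.
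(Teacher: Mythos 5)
Your part (i) is correct and is in substance the paper's own proof, lightly streamlined. The paper normalizes the complex-pair factor by a linear change of variable to $(x^2-2x+1+\delta)S(x)$, $\delta>0$, reads off the two strict inequalities $a_m-2a_{m-1}+a_{m-2}<0$ and $a_{m+n-1}-2a_{m+n-2}+a_{m+n-3}<0$ at the first and last minus positions, and contradicts the telescoped Newton inequality $a_m a_{m+n-3}\ge \kappa\, a_{m-1}a_{m+n-2}$ via the same two factors of $2$ against the constant $4$. Your unnormalized version with $x^2-2\alpha x+c$, the chain $u_m<2\alpha$, $\alpha^2<c<2\alpha u_{m+n-2}$, hence $u_m<4u_{m+n-2}$, and the telescoped ratio $u_m/u_{m+n-2}\ge\kappa$ is the same mechanism (you trade the paper's rescaling for the one extra step $\alpha^2<c$); your bookkeeping at the boundary indices and the observation that $\kappa\ge 4$ forces $n\ge 2$ are both correct.

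Part (ii) contains a genuine gap. Your first step is fine -- nonemptiness of $Pol_{d,d}^{\tsi}$ is Theorem~\ref{th:real}(ii) (Proposition~\ref{lm:hypell}(i) asserts contractibility, not existence), and the Descartes parity count indeed pins the pair $(2,d-2)$ -- but the descent to $(2,v)$ rests on the unproved claim that, at every stage, there exists a realizer with pattern $\bsi$ having two \emph{nearly coincident} negative roots. Openness of the strict-sign conditions is a local statement: it permits small perturbations of a given realizer, whereas clustering two roots is a non-local move along which coefficients may change sign, and nothing guarantees you can reach the discriminant $\beta^2=4\gamma$ while staying inside the orthant. That this is not a pedantic objection is shown by part (i) itself: the identical ``openness secures a clustered pair'' reasoning applied to the two \emph{positive} roots of the same realizer would yield a polynomial near $(x-s)^2\prod_i(x+r_i)$ with pattern $\bsi$, and then a realizer of $(0,d-2)$ -- exactly what part (i) forbids when $\kappa\ge 4$; indeed the paper's proof of (i) begins by showing the double-positive-root polynomial cannot carry the pattern. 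So the availability of a clustered pair is pattern- and sign-dependent and must be proved, not assumed; moreover, after one successful collision the resulting realizer of $(2,v-2)$ need not have a clustered negative pair, so the iteration compounds the gap. The paper avoids this entirely by building each pair $(2,v)$ from scratch with the prescribed number of complex conjugate pairs, via the concatenation Lemma~\ref{lm:concat} with factors $x\pm 1$ and $x^2+2x+2$ (Cases A--E), and with cubic factors $x^3\pm\eps_1x^2\mp\eps_2x-1$ to handle $v=0,1$; if you want to salvage your deformation idea, note that concatenating with $x^2+2x+(1-\eta)$ does produce a clustered negative pair at a controlled scale, but at that point you may as well concatenate with $x^2+2x+2$ directly, which is the paper's argument.
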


\begin{rem} {\rm Inequality~\eqref{eq:kappa} provides only sufficient conditions for non-realizability of the pattern 
$\bsi$ with the pair $(0,d-2)$. One can ask how sharp this condition is. But at the moment we do not have examples with 
\eqref{eq:kappa} violated when the pair $(0,d-2)$ is not realizable.  

}
\end{rem}

 {\rm While  working on the project, we noticed a recent paper \cite{AlFu} dealing  with the same problem and giving  complete description of  non-realizable patterns and pairs $(pos,neg)$  for polynomials up to degree $6$. This paper contains interesting historical material as well as references  \cite {AJS}, \cite{Gr} to the earlier research in this topic.}   The main result of \cite{AlFu} is as follows.
 
 \begin{tm}\label{th:AlFu}
 {\rm (i)} Up to degree $d\le 4,$ for any sign pattern $\bsi$, all admissible pairs 
 $(pos,neg)$ are realizable; 
 
 \noindent
 {\rm (ii)} for $d=5$ (up to the standard $\bZ_2\times \bZ_2$-action) the only 
 non-realizable combination is $(1,-,-,-,+,+)$ with the pair $(0,3)$;
 
  \noindent
 {\rm (ii)} for $d=6$ (up to the standard $\bZ_2\times \bZ_2$-action) the only 
 non-realizable combinations are $(1,-,-,-,-,-,+)$ with $(0,2)$ and $(0,4)$; 
 $(1,+,+,+,-,+,+)$ with $(2,0)$; 
 $(1,+,-,-,-,-,+)$ with $(0,4)$.
  
 \end{tm}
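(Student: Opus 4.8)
The plan is to handle all three parts by the same finite procedure: reduce the number of cases using the standard $\bZ_2\times\bZ_2$-action, then for each relevant degree $d\le 6$ enumerate every shortened sign pattern $\tsi$ together with its admissible pairs $(pos,neg)$ (those satisfying \eqref{stand}, subject also to $pos+neg\le d$ and $pos+neg\equiv d\bmod 2$), and finally classify each individual combination as realizable or not. The symmetry reduction is essential: it collapses the $2^d$ patterns into a much shorter list and lets me normalize, e.g.\ to the case where the constant term is positive. Part (i) then amounts to showing that for $d\le 4$ every surviving combination is realizable, while parts (ii) and (iii) require both constructions for the realizable combinations and obstruction arguments for the listed exceptions.

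For the realizable combinations I would argue by explicit construction. Given a target pair $(pos,neg)$ I write a candidate
\[
P(x)=\prod_{i=1}^{pos}(x-r_i)\cdot\prod_{j=1}^{neg}(x+s_j)\cdot Q(x),
\]
with $r_i,s_j>0$ and $Q$ a product of quadratic factors $x^2+p_kx+q_k$ with no real roots (hence everywhere positive); the free parameters are the roots and the pairs $(p_k,q_k)$. The signs of the coefficients of $P$ are continuous functions of these parameters, so it suffices to locate one point of the open parameter region landing in the prescribed orthant. In the small-degree range this can be done by inspection or by deforming from degenerate configurations (roots tending to $0$ or $\infty$, complex roots with large $q_k$), and in every case for $d\le4$, and in every non-exceptional case for $d=5,6$, such a choice exists.

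The non-realizability proofs are the crux. For a fixed exceptional combination I would assume a realizing $P$ exists, use the factorization above with the exact numbers $pos,neg$ (and $Q>0$), and translate the prescribed sign pattern into a system of polynomial inequalities in the coefficients of the factors. For the $d=5$ exception $(1,-,-,-,+,+)$ with $(0,3)$ one writes $P=(x^3+\al x^2+\be x+\gamma)(x^2+px+q)$ with $\al,\be,\gamma>0$ (elementary symmetric functions of three positive numbers) and $q>p^2/4$; expanding gives the requirements
\[
\al+p<0,\quad \be+\al p+q<0,\quad \gamma+\be p+\al q<0,\quad \gamma p+\be q>0 .
\]
Setting $t=-p>0$ these become $t>\al$, $\be+q<\al t$, $\gamma+\al q<\be t$, $\be q>\gamma t$, and the task is to show this system is inconsistent once one also invokes $q>t^2/4$ together with the Newton/Maclaurin inequalities $\al^2\ge 3\be$ and $\be^2\ge 3\al\gamma$ forced by $\al,\be,\gamma$ being symmetric functions of positive reals. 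The three $d=6$ exceptions are treated by the same scheme with the appropriate number of definite quadratic factors and positive root-factors; note moreover that the exception $(1,+,+,+,-,+,+)$ with $(2,0)$ is already an instance of Proposition~\ref{prop:even} with $l=1$, so in fact only two further inequality systems genuinely need fresh analysis.

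The hard part will be twofold. First, for each non-realizable combination I must produce a genuine certificate of infeasibility for its inequality system --- a suitable positive combination of the constraints, or an elimination of variables, that collapses to a contradiction; unlike the realizable cases no single uniform recipe works, and each system must be cracked by hand, with the Newton inequalities and the definiteness conditions $q_k>p_k^2/4$ doing the real work. Second, I must guarantee completeness of the classification: the assertion that these are the \emph{only} exceptions requires actually realizing every other admissible combination, so the bookkeeping of the enumeration must be exhaustive and the group-action reduction applied carefully, so that no case is silently dropped or double-counted.
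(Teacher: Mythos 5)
You should first note that the paper does not actually prove Theorem~\ref{th:AlFu} at all: it is quoted verbatim from \cite{AlFu}, so there is no internal proof to match your plan against, and your proposal must be judged on its own merits. On those merits there is a fatal problem at the crux step, the $d=5$ obstruction. The system you derive for $(1,-,-,-,+,+)$ with $(0,3)$, namely $t>\al$, $\be+q<\al t$, $\gamma+\al q<\be t$, $\be q>\gamma t$ together with $q>t^2/4$ and the Newton inequalities $\al^2\ge 3\be$, $\be^2\ge 3\al\gamma$, is \emph{consistent}: this combination is realizable, and the paper itself exhibits a witness in Example~\ref{ex:6}, $P(x)=x(x^2-1)^2+\eps-\eps^2(x^2+x^4)=x^5-\eps^2x^4-2x^3-\eps^2x^2+x+\eps$, which has the sign pattern $(1,-,-,-,+,+)$, no positive roots, and three negative roots (two splitting off the double root at $-1$, one near $-\eps$); its factorization into a cubic with three negative roots times a positive definite quadratic solves your inequality system. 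The theorem as printed carries a misprint: the genuine Albouy--Fu exception for $d=5$ is $(1,-,-,-,-,+)$ with $(0,3)$ (equivalently $(1,+,-,+,-,-)$ with $(3,0)$ under the $\bZ_2\times\bZ_2$-action), which does satisfy the hypotheses of Lemma~\ref{lm:jens2} and of Proposition~\ref{prop:3parts} with $m=p=1$, $n=4$, $\kappa=9\ge 4$, whereas the printed pattern has $m=1$, $n=3$, $p=2$, $\kappa=3<4$. So the certificate of infeasibility you plan to produce does not exist; had you set up the corrected pattern, your Newton-inequality scheme would succeed and would in fact reproduce the paper's own proof of Proposition~\ref{prop:3parts}(i): Newton gives $a_ma_{m+n-3}\ge\kappa\, a_{m-1}a_{m+n-2}$, while the two required negative coefficients force $a_m<2a_{m-1}$ and $a_{m+n-3}<2a_{m+n-2}$, a contradiction once $\kappa\ge 4$.

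A second, independent gap concerns the $d=6$ case $(1,-,-,-,-,-,+)$ with $(0,2)$. Here the hypothetical polynomial has \emph{two} complex-conjugate pairs, so the real-rooted cofactor in your factorization has degree $2$ and the Newton/Maclaurin inequalities, which are the engine of your infeasibility scheme, become essentially vacuous; your uniform recipe covers $(0,4)$ (via the $\kappa=16$ instance of Proposition~\ref{prop:3parts}) but stalls on $(0,2)$. What is needed there is an evaluation argument of the type used in Proposition~\ref{prop:even} and in the paper's proof of Theorem~\ref{th:8}: writing $P=P_{ev}+P_{od}$, the odd part has all negative coefficients, so $P_{od}(x)<0$ for $x>0$; if $P$ has no positive roots then $P(x)>0$ on $\bR_+$, whence $P(-x)=P(x)-2P_{od}(x)>0$ as well, so $P$ has no negative roots either, killing both $(0,2)$ and $(0,4)$ in one line. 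Finally, on the realizability side your ``locate a point in the orthant by inspection'' is legitimate for finitely many small cases but is exactly where the paper's concatenation machinery (Lemmas~\ref{lm:concat} and~\ref{lm:concat2}, together with Lemma~\ref{lm:stryk}) turns ad hoc search into an induction on $d$; without some such systematic device your completeness claim for all non-exceptional combinations up to $d=6$ rests on unverified bookkeeping.
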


Trying to extend Theorem~\ref{th:AlFu}, we obtained a computer-aided  classification of all non-realizable  sign patterns and pairs for $d=7$ and almost all for $d=8$, see below. 

\begin{tm}\label{th:7} For $d=7$,  among the 1472 possible combinations of a sign pattern
and a pair (up to the standard $\bZ_2\times \bZ_2$-action), there exist exactly 6 which are non-realizable. They are:
$$(1,+,-,-,-,-,-,+) \quad\text{with} \quad(0,5);\quad  (1,+,-,-,-,-,+,+) \quad\text{with} \quad(0,5);$$
$$(1,+,-,+,-,-,-,-) \quad\text{with} \quad(3,0);\quad  (1,+,+,-,-,-,-,+) \quad\text{with} \quad(0,5);$$
$$\text{and,}\quad(1,-,-,-,-,-,-,+) \quad\text{with}\;  (0,3)\;\text{and} \;(0,3).$$
\end{tm}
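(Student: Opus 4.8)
The plan is to carry out a systematic enumeration that reduces the problem to a manageable finite check, combined with targeted non-realizability arguments for the six exceptional cases.

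The plan is to split the classification into two essentially independent tasks: confirming the enumeration together with the realizability of all but finitely many combinations, and then proving the non-realizability of the six exceptional ones. First I would set up the bookkeeping. For $d=7$ there are $2^7=128$ shortened sign patterns $\tsi$; for each I compute its Descartes pair $(p_\tsi,n_\tsi)$ and list the admissible pairs $(pos,neg)$ satisfying \eqref{stand}. Summing the numbers of admissible pairs over all patterns and then passing to orbits of the standard $\bZ_2\times\bZ_2$-action (taking care of the patterns fixed by a generator, such as the palindromic $(1,-,-,-,-,-,-,+)$, which must not be double-counted) should reproduce the stated total of $1472$ orbit representatives. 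All subsequent work is then carried out on one representative per orbit.

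For the realizability direction I would proceed orbit by orbit and exhibit, for every representative other than the six listed, an explicit polynomial in $Pol_7^{\tsi}$ with the prescribed root count. Three sources of examples cover most cases: the structured families already available here, namely $Pol_{7,7}^{\tsi}$ and $Pol_{7,0}^{\tsi}$ from Proposition~\ref{lm:hypell}(i), the all-pluses and single-sign-change patterns from Proposition~\ref{lm:hypell}(ii)--(iii), and the $(2,v)$ pairs furnished for three-part patterns by Proposition~\ref{prop:3parts}(ii); direct constructions as products $\prod(x-r_i)\prod(x+s_j)\,Q(x)$ with $r_i,s_j>0$ and $Q$ a product of complex-root quadratics, the magnitudes tuned to land in the desired orthant; and, for the residual cases, a computer search sampling the orthant $Pol_7^{\tsi}$. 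Since realizability is an existence statement, a single witness per orbit certifies it, and the $\bZ_2\times\bZ_2$-action removes the need to treat symmetric orbits separately.

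The non-realizability of four of the six combinations is immediate from results already in hand. Each of the patterns $(1,+,-,-,-,-,-,+)$, $(1,+,-,-,-,-,+,+)$, $(1,+,+,-,-,-,-,+)$ and $(1,-,-,-,-,-,-,+)$ is of the three-part form of Proposition~\ref{prop:3parts}, with $(m,n,p)$ equal to $(2,5,1)$, $(2,4,2)$, $(3,4,1)$ and $(1,6,1)$ respectively; the associated quantities $\kappa$ from \eqref{eq:kappa} equal $10$, $4$, $5$ and $25$, all $\ge 4$, so by part (i) the pair $(0,5)=(0,d-2)$ is non-realizable in each case. This accounts for the four $(0,5)$ entries, the one attached to the palindromic $(1,-,-,-,-,-,-,+)$ included.

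The crux, and the main obstacle, is the non-realizability of the two genuinely new combinations: $(3,0)$ for $\bsi=(1,+,-,+,-,-,-,-)$ and $(0,3)$ for $\bsi=(1,-,-,-,-,-,-,+)$. One checks that their $\bZ_2\times\bZ_2$-orbits are disjoint and that Proposition~\ref{prop:3parts} does not reach them: the first pattern is not of three-part form, while the second is but part (i) only forbids its pair $(0,d-2)=(0,5)$, not $(0,3)$. For the second case a realizing polynomial would factor as $(x+s_1)(x+s_2)(x+s_3)\,R(x)$ with $s_i>0$ and $R$ a quartic positive on all of $\bR$; expanding and imposing the six interior signs of $(1,-,-,-,-,-,-,+)$ produces a system of polynomial inequalities in $s_1,s_2,s_3$ and the coefficients of $R$ (together with the positivity-on-$\bR$ constraints on $R$), and the task is to show the system is infeasible. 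The analogous factorization $(x-r_1)(x-r_2)(x-r_3)\,Q(x)$ handles $(3,0)$. I would attempt a human-readable contradiction by combining the sign requirements on a few low- and high-order coefficients with Newton-type inequalities for $R$ (resp.\ $Q$) and with Rolle/Descartes bounds on $P$ and $P'$; the difficulty is that the naive Descartes count on $P'$ is already consistent with the forbidden configuration, so any contradiction must come from a more global relation among the coefficients. Failing a clean elementary argument, the emptiness of each semialgebraic system can be certified by quantifier elimination (cylindrical algebraic decomposition) or by a Positivstellensatz certificate, which is the precise sense in which the classification is computer-aided. Establishing these two exclusions is where essentially all the real work lies.
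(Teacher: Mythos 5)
Your bookkeeping, your use of Proposition~\ref{prop:3parts}(i) for the four three-part patterns (the values $\kappa=10,4,5,25$ are all correct), and your overall computer-aided strategy for the realizability half all match the paper, which likewise runs a script, reduces via the concatenation lemmas (Lemma~\ref{lm:concat}, Lemma~\ref{lm:concat2}) to a handful of residual cases, and finishes three of them with explicit numerical products of three linear factors and two positive quadratics. So that half of your plan is sound and essentially the paper's.

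The genuine gap is exactly at the two cases you yourself flag as the crux: $(1,+,-,+,-,-,-,-)$ with $(3,0)$ and $(1,-,-,-,-,-,-,+)$ with $(0,3)$. As written you prove neither; you only propose a semialgebraic infeasibility check by CAD or a Positivstellensatz certificate, and you give no reason your Newton/Rolle attack would close them (indeed the paper's Newton-inequality tool, Proposition~\ref{prop:3parts}(i), is precisely what fails to reach these cases, as you note). The missing idea is Lemma~\ref{lm:jens2}: for odd $d$ and a pattern whose even-degree coefficients are all negative except a positive constant term, with at most one sign change among the odd-degree coefficients, splitting $P=P_{ev}+P_{od}$ and locating the unique positive roots $x'_{od}<x_{od}<x_{ev}$ of $P'_{od}$, $P_{od}$, $P_{ev}$ shows that if $P$ has no positive roots then $P(-x)$ is positive on $[0,x_{od}]$ and strictly decreasing on $[x'_{od},+\infty)$, so $P$ has \emph{exactly one} negative root; hence every pair $(0,s)$ with $s\geq 3$ is excluded. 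The pattern $(1,-,-,-,-,-,-,+)$ is of this type directly, killing $(0,3)$ (and $(0,5)$ again), and the standard $\bZ_2\times\bZ_2$-action (reversal composed with the sign flip of even-degree monomials) carries $(1,+,-,+,-,-,-,-)$ with $(3,0)$ to $(1,-,+,-,-,-,-,+)$ with $(0,3)$, which is again of this type. Without this lemma, or an actually executed elimination computation in its place, your classification for $d=7$ is incomplete at precisely the two combinations that required new work.
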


\begin{tm}\label{th:8}  For $d=8,$  among the 3648 possible combinations of a sign pattern and a pair (up to the standard $\bZ_2\times \bZ_2$-action), there exist 13 which are known to be non-realizable. They are:
$$(1,+,-,-,-,-,-,+,+) \quad\text{with} \quad(0,6);\quad  (1,-,-,-,-,-,-,+,+) \quad\text{with} \quad(0,6);$$
$$(1,+,+,+,-,-,-,-,+) \quad\text{with} \quad(0,6);\quad  (1,+,+,-,-,-,-,-,+) \quad\text{with} \quad(0,6);$$
$$(1,+,+,+,-,+,+,+,+) \quad\text{with} \quad(2,0);\quad  (1,+,+,+,+,+,-,+,+) \quad\text{with} \quad(2,0);$$
$$(1,+,+,+,-,+,-,+,+) \quad\text{with}\;  (2,0)\;\text{and} \;(4,0)\;;\quad(1,-,-,-,+,-,-,-,+) \quad\text{with}\;$$  
$$ (0,2)\;\text{and} \;(0,4); \quad(1,-,-,-,-,-,-,-,+) \quad\text{with}\;  (0,2), (0,4), \text{  and } (0,6).$$
\end{tm}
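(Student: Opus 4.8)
The plan is to treat the two directions, realizability and non-realizability, by entirely different methods, after first organizing the $3648$ cases. First I would enumerate all shortened sign patterns $\tsi$ of length $8$ and, for each, list the admissible pairs $(pos,neg)$ satisfying \eqref{stand}; then I would quotient by the standard $\bZ_2\times\bZ_2$-action (negation $P(x)\mapsto P(-x)$, which for $d=8$ is $(-1)^dP(-x)$, and reversal $P(x)\mapsto x^8P(1/x)$ up to normalization), keeping one representative per orbit, to arrive at the stated count. For each representative one must then decide realizability.

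For the positive (realizability) direction I would proceed constructively: the bulk of the cases are settled by exhibiting a witness polynomial, prescribing $pos$ positive roots, $neg$ negative roots, and $(8-pos-neg)/2$ conjugate pairs, expanding, and reading off the coefficient signs. To turn isolated numerical hits into a systematic argument I would lean on the results already available. Theorem~\ref{th:real} and Proposition~\ref{lm:hypell}(i) give realizability of the extreme strata $Pol^{\tsi}_{8,8}$ and $Pol^{\tsi}_{8,0}$, Proposition~\ref{lm:hypell}(ii),(iii) handle the all-plus, alternating, and one-sign-change patterns, and Proposition~\ref{prop:3parts}(ii) disposes of every $(2,v)$ pair for three-part patterns. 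For the remaining pairs I would deform an already realized configuration by merging a conjugate pair of roots onto the real axis (or splitting a pair off it) and invoke continuity of the coefficients, checking that no coefficient vanishes along the path so that the orbit $Pol_8^{\tsi}$ is preserved; this reduces the problem to finitely many seed polynomials confirmed by computer.

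For the negative (non-realizability) direction I would first dispatch the cases covered by the general propositions. Proposition~\ref{prop:even}, applied with $d=8$ and the odd-degree and constant coefficients positive, accounts for $(1,+,+,+,-,+,+,+,+)$ and $(1,+,+,+,+,+,-,+,+)$ (each with $l=1$, giving $(2,0)$) and for $(1,+,+,+,-,+,-,+,+)$ (with $l=2$, giving $(2,0)$ and $(4,0)$). Proposition~\ref{prop:3parts}(i), applied to the five three-part patterns $(+^m,-^n,+^p)$ with $m+n+p=9$, yields all five $(0,6)$ non-realizabilities, since in each case $\kappa=\frac{7-m}{m}\cdot\frac{7-p}{p}\ge 4$ (ranging from $\kappa=4.5$ for $m=4,p=1$ up to $\kappa=36$ for $m=p=1$). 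This leaves precisely the four pairs attached to $(1,-,-,-,+,-,-,-,+)$ and $(1,-,-,-,-,-,-,-,+)$ with $(0,2)$ and $(0,4)$.

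These last four are the crux and the main obstacle, because neither general proposition applies: the all-minus pattern $+^1-^7+^1$ needs the lower pairs $(0,2),(0,4)$ rather than the single $(0,d-2)$ pair controlled by \eqref{eq:kappa}, and $(1,-,-,-,+,-,-,-,+)$ is not even of three-part form. Here I would try to exhibit these as members of a non-realizable series extending the degree-$6$ phenomenon of Theorem~\ref{th:AlFu}, where $+^1-^5+^1$ already fails both $(0,2)$ and $(0,4)$; the aim is a uniform argument factoring a hypothetical witness as $(x+r_1)(x+r_2)\cdots$ times a product of positive-definite quadratics $x^2+b_jx+c_j$ with $c_j>0$ and $b_j^2<4c_j$, and then deriving a contradiction among the forced coefficient inequalities. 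I expect this algebraic elimination (or an equivalent argument through the stratification of the discriminant $\D_8$) to be genuinely delicate, and it is presumably why the statement is phrased as ``known to be non-realizable'' rather than as a complete classification: the deformation step in the positive direction need not close for every one of the $3648$ cases, so one may be left with a few combinations whose realizability remains undetermined even after the $13$ above are proved non-realizable.
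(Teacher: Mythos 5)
Your case organization and most of the non-realizability direction track the paper: the five $(0,6)$ cases do follow from Proposition~\ref{prop:3parts}(i) with exactly the $\kappa$-values you compute, and the $(2,0)$/$(4,0)$ cases for $(1,+,+,+,-,+,+,+,+)$, $(1,+,+,+,+,+,-,+,+)$ and $(1,+,+,+,-,+,-,+,+)$ follow from Proposition~\ref{prop:even} with $l=1,1,2$ (the paper's proof text nominally cites Lemma~\ref{lm:jens2}, which only treats odd $d$, so your attribution is in fact the more accurate one). The realizability direction is likewise computer-aided in the paper, but through Lemma~\ref{lm:concat2}, Lemma~\ref{lm:stryk} and a Mathematica script rather than your root-merging deformation; your step ``checking that no coefficient vanishes along the path'' is precisely the hard part and is left unjustified, though since you too defer to finitely many computer-verified seeds this is a difference of technique rather than of substance.

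The genuine gap is your treatment of the four pairs you call the crux: $(1,-,-,-,+,-,-,-,+)$ and $(1,-,-,-,-,-,-,-,+)$ with $(0,2)$ and $(0,4)$. You assert that ``neither general proposition applies'' and then offer only a speculative algebraic elimination, which is not a proof --- and the assertion is false. Both patterns have all odd-degree coefficients negative and constant term positive, so the first generator of the standard $\bZ_2\times\bZ_2$-action, $P(x)\mapsto(-1)^dP(-x)=P(-x)$ for $d=8$ (which swaps $pos$ and $neg$), turns them into patterns satisfying the hypotheses of Proposition~\ref{prop:even}: $(1,-,-,-,-,-,-,-,+)$ becomes $(1,+,-,+,-,+,-,+,+)$ with $l=3$, excluding $(2,0),(4,0),(6,0)$, i.e.\ $(0,2),(0,4),(0,6)$ for the original pattern; and $(1,-,-,-,+,-,-,-,+)$ becomes $(1,+,-,+,+,+,-,+,+)$ with $l=2$, excluding $(2,0),(4,0)$, i.e.\ $(0,2),(0,4)$. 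This is exactly the parenthetical remark in Proposition~\ref{prop:even} about using the action, which you even quoted but applied only to patterns literally having all odd signs $+$. Equivalently --- and this is the paper's own one-line argument for the first of these patterns --- since $a_1,a_3,a_5,a_7<0$ one has $P(-x)-P(x)=-2(a_1x^7+a_3x^5+a_5x^3+a_7x)>0$ for $x\in\bR_+$, so a negative root $-x_0$ forces $P(x_0)<0$, which together with $P(0)>0$ and $P(+\infty)=+\infty$ yields at least two positive roots, contradicting $pos=0$. So the four cases you left open are a two-line consequence of tools you had already invoked; no new non-realizable series or discriminant stratification is needed, and the hedge ``known to be non-realizable'' in the statement refers solely to the seven unresolved combinations of Remark~\ref{rem:8}, as you correctly surmise at the end, not to any incompleteness in these four.
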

\begin{rem}\label{rem:8} {\rm For $d=8$, there exist 7 (up to the standard $\bZ_2\times \bZ_2$-action) 
combinations of a sign pattern and a pair for which it is still unknown whether they
are realizable or not. They are:
$$(1,+,-,+,-,-,-,+,+) \quad\text{with} \quad(4,0);\quad  (1,+,-,+,-,+,-,-,+) \quad\text{with} \quad(4,0);$$
$$(1,+,+,-,-,-,-,+,+) \quad\text{with} \quad(0,6);\quad  (1,+,+,-,-,+,-,+,+) \quad\text{with} \quad(4,0);$$
$$(1,+,+,+,-,+,-,-,+) \quad\text{with} \quad(4,0); 
\quad(1,+,-,+,-,-,-,-,+) \quad\text{with}\;  (4,0)$$ $$\;\text{and} \;(0,4).$$}
\end{rem}

Based on the above results, we formulate the following claim. 

\begin{conj}\label{conj:main} For an arbitrary sign pattern $\bsi$, the only type of pairs $(pos,neg)$ which can be non-realizable has either $pos$ or $neg$ vanishing. In other words, for any sign pattern $\bsi$, each pair $(pos,neg)$ satisfying \eqref{stand}  with positive $pos$ and $neg$ is realizable. 
\end{conj}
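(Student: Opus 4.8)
Since the statement is a conjecture, what follows is a strategy together with the point where I expect it to stall. The plan is to argue by deformation inside a fixed orthant $Pol_d^\tsi$, using one structural observation: for $P\in Pol_d^\tsi$ the constant term $a_d$ keeps a fixed nonzero sign, so $P(0)=a_d\neq 0$ and no real root can cross the origin along a path that stays in the orthant. Hence, as $P$ varies in $Pol_d^\tsi$, positive roots remain positive and negative roots remain negative, and the pair $(pos,neg)$ can change only when two equal-sign real roots collide and turn into a complex-conjugate pair (or conversely). Crossing a wall of $\D_d$ inside $Pol_d^\tsi$ therefore alters $(pos,neg)$ by $(\mp 2,0)$ or $(0,\mp 2)$, never mixing the coordinates; the realizable pairs for $\bsi$ are exactly the values attained on the connected components of $Pol_d^\tsi\setminus\D_d$, and the conjecture predicts that every admissible value with $pos,neg\ge 1$ occurs.

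I would start from a realization of the Descartes pair $(p_\bsi,n_\bsi)$ itself, which is available from Theorem~\ref{th:real}(ii): the set $Pol_{d,d}^\tsi$ is nonempty, and a real-rooted polynomial with sign pattern $\bsi$ automatically realizes $(p_\bsi,n_\bsi)$, since its positive and negative root-counts are bounded by $p_\bsi$ and $n_\bsi$ with matching parities and sum to $d=p_\bsi+n_\bsi$. Given a target admissible pair $(pos,neg)$ with $pos,neg\ge 1$, one then has to complexify $(p_\bsi-pos)/2$ pairs of positive roots and $(n_\bsi-neg)/2$ pairs of negative roots while keeping at least one root of each sign real throughout, which is possible exactly because $pos\ge 1$ and $neg\ge 1$. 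Each single complexification splits into two steps: first slide two chosen same-sign real roots toward each other until adjacent, then push them off the real axis. The second step is harmless: writing $P=(x^2-c_1x+c_0)R(x)$ for the colliding factor, one raises $c_0$ slightly past $c_1^2/4$, which perturbs all coefficients of $P$ by an arbitrarily small amount once the two roots are close, and so cannot flip any nonzero sign.

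The whole difficulty is thus concentrated in the first step: bringing two same-sign real roots together along a path that never leaves the orthant, i.e.\ along which no coefficient of $P$ changes sign. Equivalently, one wants enough freedom in the initial all-real realization to assume the roots destined for complexification are already clustered. This is where control is genuinely lost for general patterns, and it is also where the known obstructions sit: every non-realizable example of Propositions~\ref{prop:even} and~\ref{prop:3parts} and of Theorems~\ref{th:AlFu}, \ref{th:7}, \ref{th:8} has $pos=0$ or $neg=0$. The heuristic supporting the conjecture is that when roots of both signs are present, $P$ changes sign on each of the two half-axes, and this should give the room to route the deformation around the walls of $\D_d$ that carry the coefficient-sign changes; once all roots of one sign have been absorbed into complex pairs this room disappears and the Descartes-type obstructions switch on. Making this precise, most likely through a concatenation/gluing construction that assembles the prescribed sign pattern block by block from factors each contributing a single positive or negative root plus complex pairs, with an induction on $d$ shortened by the standard $\bZ_2\times\bZ_2$-action, is the hard part, and is what keeps the statement a conjecture rather than a theorem.
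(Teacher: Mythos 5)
The statement you were asked to prove is Conjecture~\ref{conj:main}; the paper contains no proof of it --- only the remark that it has been verified by computer up to $d=10$ --- so there is no argument to match yours against, and your decision to present a strategy together with its sticking point is the right call. Your framing is correct as far as it goes: inside a fixed orthant $Pol_d^\tsi$ the constant term keeps a fixed nonzero sign, no real root can cross the origin, and along a generic path the pair $(pos,neg)$ jumps only by $(\pm 2,0)$ or $(0,\pm 2)$ at walls of $\D_d$; moreover a real-rooted polynomial with pattern $\bsi$ exists by Theorem~\ref{th:real}(ii) and automatically realizes the Descartes pair $(p_\bsi,n_\bsi)$, by the parity and sum constraints you cite. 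Your ``second step'' (pushing two nearly coincident same-sign roots off the axis) is indeed an arbitrarily small coefficient perturbation and is harmless. The genuine gap is exactly where you locate it: nothing controls the coefficient signs while you slide two same-sign roots together, and worse, the topology your wall-crossing picture presupposes is itself open --- the paper's Final Remarks pose as an unsolved Problem whether the set of polynomials with given $\bsi$ and given $(pos,neg)$ is even path-connected, so the component structure of $Pol_d^\tsi\setminus\D_d$ on which a deformation argument would rest is not currently available.

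One point worth knowing: the block-by-block gluing you sketch at the end is not hypothetical but is actually carried out in the paper. Proposition~\ref{pr:Jens} cuts $\bsi$ into blocks $\tau_k$ of length roughly three, realizes each block with a prescribed admissible pair from the list $(1,0),(1,2),(3,0),(0,1),(2,1),(0,3)$, and glues them with the Second concatenation lemma (Lemma~\ref{lm:concat2}); this proves the conjecture whenever $\min(pos,neg) > \left\lfloor \frac{d-4}{3}\right\rfloor$. The reason this does not close the conjecture --- and the reason your plan would stall even if recast in concatenation form --- is that every short odd-degree block is forced to contribute at least one real root, so stacking $\left\lfloor \frac{d-4}{3}\right\rfloor+1$ such blocks manufactures too many real roots to reach pairs like $(1,1)$ or $(2,2)$ for large $d$. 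To assemble those, some blocks would have to be even-degree sub-patterns realizing pairs of the form $(0,\mathrm{even})$ or $(\mathrm{even},0)$, and the realizability of precisely such pairs is the question at stake (they are where all known obstructions live, per Propositions~\ref{prop:even}, \ref{prop:3parts} and Theorems~\ref{th:AlFu}, \ref{th:7}, \ref{th:8}); the induction becomes circular exactly at the boundary of the conjecture. So: no error in what you assert, a correctly identified and genuinely open gap, and your proposed endgame coincides with the paper's strongest partial result rather than going beyond it.
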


Rephrasing the above conjecture, we say that the only phenomenon implying non-realizability is that "real roots on one half-axis  force real roots on the other half-axis".  At the moment this conjecture is verified by computer-aided methods up to 
$d=10$.

\begin{tm}
\label{th:simplyconnectedk}
For any sign pattern $\bsi$ and integer $m$, the closure of the  union of intersections
\begin{equation}
\label{eqn:simplyconnectedset}
Pol_{d,\geq m}^\bsi := \bigcup_{j = m}^d \overline{Pol_{d,j}^\bsi}
\end{equation}
is simply connected.
\end{tm}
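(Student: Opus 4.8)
The plan is to prove the stronger statement that $Pol_{d,\geq m}^\bsi$ is \emph{contractible}, by writing down an explicit null-homotopy coming from the scaling action on monic polynomials. The key observation I would build on is that the one-parameter group
\[
S_\lambda\colon (a_1,\dots,a_d)\longmapsto(\lambda a_1,\lambda^2 a_2,\dots,\lambda^d a_d),\qquad \lambda>0,
\]
which on the level of polynomials is $p(x)\mapsto \lambda^d p(x/\lambda)$, preserves simultaneously every piece of data that enters the definition of the set. Indeed, each coefficient is merely rescaled by a positive factor, so the sign pattern $\bsi$ is fixed; and the effect of $S_\lambda$ on roots is the uniform dilation $r\mapsto \lambda r$, which preserves realness, multiplicity and distinctness of roots. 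Consequently $S_\lambda$ is a homeomorphism of $Pol_d=\bR^d$ that maps each stratum $Pol_{d,j}^\bsi$ onto itself, hence maps each closure $\overline{Pol_{d,j}^\bsi}$ onto itself, and therefore maps the union $Pol_{d,\geq m}^\bsi$ onto itself for every $\lambda>0$.

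With this in hand I would define the homotopy
\[
H\colon Pol_{d,\geq m}^\bsi\times[0,1]\longrightarrow Pol_{d,\geq m}^\bsi,\qquad
H\big((a_i)_i,t\big)=\big((1-t)^i a_i\big)_i,
\]
so that $H(\cdot,t)=S_{1-t}$ for $t<1$. This is a polynomial, hence continuous, map in $(a_1,\dots,a_d,t)$; at $t=0$ it is the identity, and as $t\to1$ every polynomial converges to $x^d$. The first paragraph shows $H(p,t)\in Pol_{d,\geq m}^\bsi$ for all $t<1$, and since $Pol_{d,\geq m}^\bsi$ is closed (a finite union of closed sets) the limit $H(p,1)=x^d$ lies in it as well. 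Thus $H$ contracts $Pol_{d,\geq m}^\bsi$ to the single point $x^d$, which has $d\ge m$ real roots and so genuinely belongs to the set, and contractibility — a fortiori simple connectivity — follows at once. Note that the apparent mismatch between ``union of closures'' and ``closure of the union'' in \eqref{eqn:simplyconnectedset} is harmless, since for a finite family $\bigcup_{j}\overline{Pol_{d,j}^\bsi}=\overline{\bigcup_{j}Pol_{d,j}^\bsi}$.

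The only points that I expect to require care are the two degenerations occurring at $t=1$: first, that collapsing all roots to the origin keeps us inside the set, which is handled purely by closedness and needs no tracking of multiplicities; and second, that $H$ stays jointly continuous where its image degenerates to $x^d$, which is immediate from the explicit formula $(1-t)^i a_i$. The conceptual obstacle that a reader might anticipate — having to move complex-conjugate pairs across the real axis into real roots while respecting the sign constraints, a maneuver that the realizability phenomena elsewhere in the paper show to be genuinely obstructed — is entirely bypassed here: the scaling contraction never alters the number of real roots for $t<1$, and the extra (real, multiple) roots appear only in the limit $x^d$, so no root ever has to cross the real axis. It is worth remarking that the same homotopy reproves the contractibility of $\overline{Pol_{d,d}^\bsi}$ in Proposition~\ref{lm:hypell}(i), whereas the complementary case $\overline{Pol_{d,0}^\bsi}$ lies outside its scope precisely because the limiting point $x^d$ then has the wrong number of real roots.
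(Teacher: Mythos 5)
Your scaling argument is correct, and it in fact proves more than the theorem asserts. The weighted dilation $S_\lambda\colon a_i\mapsto \lambda^i a_i$, i.e.\ $p(x)\mapsto \lambda^d p(x/\lambda)$, multiplies each coefficient by a positive constant and dilates the root multiset by $\lambda$, so it maps each orthant $Pol_d^\tsi$ and each stratum $Pol_{d,j}$ onto themselves; being a homeomorphism of the coefficient space, it therefore maps every closure $\overline{Pol_{d,j}^\bsi}$ onto itself, and your explicit homotopy $H(p,t)=S_{1-t}\,p$ stays in the closed set $Pol_{d,\geq m}^\bsi$ for $t<1$ and lands at $t=1$ on $x^d$, which belongs to the set by closedness and invariance. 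This yields contractibility, a strictly stronger conclusion than simple connectedness, by a genuinely different and far more elementary route than the paper's: there, path-connectedness is obtained from the logarithmic convexity of the sets $S_K$ of lopsided polynomials (citing \cite{F14}) combined with polar-derivative paths $\alpha\mapsto p+\frac{x}{\alpha}p'$, and simple connectedness is then proved by induction on $d$, differentiating a loop into $Pol_{d-1,\geq m-1}^{\hat\bsi}$ via Rolle, contracting it there, and lifting the contraction back through polar derivatives. Your proof buys brevity, the stronger statement, and extra generality --- it applies verbatim to $\bigcup_{j\in J}\overline{Pol_{d,j}^\bsi}$ for an \emph{arbitrary} index set $J\subseteq\{0,\dots,d\}$, not only the up-sets $\{j\geq m\}$, with no induction and no input from \cite{F14}; what the paper's heavier machinery buys is reusability, since the polar-derivative and lopsidedness techniques also drive Lemma~\ref{lm:Jens} and Proposition~\ref{lm:hypell}, which concern the \emph{open} strata, where your homotopy cannot work because $x^d$ lies in neither $Pol_{d,d}$ nor $Pol_{d,0}$. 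One small correction to your closing remark: the asymmetry you draw there is illusory, since $x^d$ does lie in $\overline{Pol_{d,0}^\bsi}$ as well (scale any polynomial without real roots), so your homotopy contracts that closure just as readily as $\overline{Pol_{d,d}^\bsi}$; what it fails to reprove, in both cases alike, is the contractibility of the open strata asserted in Proposition~\ref{lm:hypell}(i).
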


\medskip 
\noindent 
{\em Acknowledgements.} The second author is grateful to the Mathematics Department of Stockholm University for its hospitality in December 2009 and November 2014.  

\section{Proofs}

 The next two lemmas are very useful  in our arguments proving the realizability of a given pair $(pos, neg)$ with a given sign pattern $\bsi$.  

\begin{lm}\label{lm:concat}{\rm [}First concatenation lemma{\rm ]}  Suppose that  the monic polynomials $P_1$ and $P_2$ of degrees $d_1$ and $d_2$  with sign patterns  $\bsi_1=(1,\tsi_1 )$ and $\bsi_2=(1,
\tsi_2 )$ respectively  realize the pairs $(pos_1,neg_1)$ and $(pos_2,neg_2)$. (Here $
 \tsi_1$ and $ \tsi_2$ are the shortened sign patterns of $P_1$ and $P_2$ respectively.) Then 

\begin{itemize}

\item
if the last position of $\widetilde \si_1$ is $+,$ then for any $\eps>0$ small enough, the polynomial $\eps^{d_2}P_1(x)P_2(x/\eps)$ realizes the sign pattern $(1,\widetilde \si_1,\widetilde \si_2)$ and the pair $(pos_1+pos_2,neg_1+neg_2)$.

\item if the last position of $\widetilde \si_1$ is $-,$ then for any $\eps>0$ small enough, the polynomial $\eps^{d_2}P_1(x)P_2(x/\eps)$ realizes the sign pattern $(1,\widetilde \si_1,-\widetilde \si_2)$ and the pair $(pos_1+pos_2,neg_1+neg_2)$. (Here $-\tsi$ is the sign pattern obtained from $\tsi$ by changing each $+$ by $-$ and vice versa.)

\end{itemize}

\end{lm}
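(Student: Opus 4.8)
The plan is to analyze separately the two assertions of the lemma — the count of positive and negative roots, and the resulting sign pattern — treating the product $Q_\eps(x) := \eps^{d_2} P_1(x) P_2(x/\eps)$ as a perturbation of $P_1(x)\cdot x^{d_2}$ for small $\eps>0$.

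First I would settle the root count, which is the easy part. Since $\eps^{d_2}P_2(x/\eps)$ and $P_2(x/\eps)$ have the same zeros, the real roots of $Q_\eps$ are exactly the $d_1$ roots of $P_1$ together with the numbers $\eps\rho$, where $\rho$ runs over the roots of $P_2$. For $\eps>0$ the map $\rho\mapsto\eps\rho$ preserves the sign of every real root and sends non-real roots to non-real roots. Hence the rescaled copy of $P_2$ contributes precisely $pos_2$ positive and $neg_2$ negative real roots, and together with the contribution of $P_1$ this gives the pair $(pos_1+pos_2,\,neg_1+neg_2)$, independently of $\eps>0$.

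The substance is in the sign pattern. Writing $P_1(x)=\sum_{i=0}^{d_1}a_i x^{d_1-i}$ and $P_2(x)=\sum_{j=0}^{d_2}b_j x^{d_2-j}$ with $a_0=b_0=1$, one computes
\[
Q_\eps(x)=\sum_{i=0}^{d_1}\sum_{j=0}^{d_2} a_i b_j\,\eps^{\,j}\,x^{(d_1+d_2)-(i+j)},
\]
so the coefficient of $x^{(d_1+d_2)-m}$ is $c_m(\eps)=\sum_{i+j=m}a_i b_j\eps^{\,j}$. The key observation is that $c_m(\eps)$ is a polynomial in $\eps$ whose lowest-order term dictates its sign for small $\eps>0$, and that this lowest-order term is uniquely determined because all $a_i$ and $b_j$ are nonzero. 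Concretely, for $1\le m\le d_1$ the minimal exponent of $\eps$ is $0$ (attained only at $i=m$, $j=0$), so $c_m(\eps)$ has the sign of $a_m$, i.e.\ the $m$-th entry of $\tsi_1$; while for $d_1<m\le d_1+d_2$ the minimal exponent is $m-d_1$ (attained only at $i=d_1$, $j=m-d_1$), so $c_m(\eps)$ has the sign of $a_{d_1}b_{m-d_1}$.

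I would then finish by reading off the two cases. The factor $a_{d_1}$ is the constant term of $P_1$, whose sign is the last entry of $\tsi_1$. If that entry is $+$, the tail signs equal those of $\tsi_2$ and the full pattern is $(1,\tsi_1,\tsi_2)$; if it is $-$, the tail signs are reversed and the pattern is $(1,\tsi_1,-\tsi_2)$, matching the two bullets. The only point requiring care — and the nearest thing to an obstacle — is uniformity in $\eps$: each $c_m(\eps)$ agrees in sign with its lowest-order coefficient only once $\eps$ drops below some threshold depending on $m$, but as there are finitely many indices $m$ one simply takes the minimum of these thresholds, so a single sufficiently small $\eps>0$ realizes the asserted sign pattern together with the root count established above.
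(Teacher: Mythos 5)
Your proposal is correct and takes essentially the same approach as the paper: expanding the coefficients of $\eps^{d_2}P_1(x)P_2(x/\eps)$ in powers of $\eps$ and reading the signs off the lowest-order terms, which is precisely the paper's observation that the first $d_1+1$ coefficients tend to those of $P_1$ while the remaining ones agree, up to higher order in $\eps$, with those of $\eps^{d_2}P_1(0)P_2(x/\eps)$. Your explicit root-count argument and the uniform choice of a single small $\eps$ merely fill in details the paper leaves implicit.
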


\begin{proof}  Set $P_2(x)=x^{d_2}+b_1x^{d_2-1}+b_2x^{d_2-2}+\dots+b_{d_2}$. Then $\eps^{d_2}P_2(x/\eps)=x^{d_2}+\eps b_1x^{d_2-1}+\eps^2 b_2x^{d_2-2}+\dots+\eps^{d_2}b_{d_2}$  and for $\eps>0$ small enough, the first $d_1+1$ coefficients of   $\eps^{d_2}P_1(x)P_2(x/\eps)$ are close to the respective coefficients of $P_1(x)$ (hence have the same signs). Then if the last entry of $\widetilde \si_1$ is $+$, the remaining coefficients of  $\eps^{d_2}P_1(x)P_2(x/\eps)$  (up to higher order terms in $\eps$) are equal to the respective coefficients of $\eps^{d_2}P_1(0)P_2(x/\eps)$. 
If this  entry  is $-$, the remaining coefficients of  $\eps^{d_2}P_1(x)P_2(x/\eps)$  (up to higher order terms in $\eps$) are equal to the opposite of the respective coefficients of $\eps^{d_2}P_1(0)P_2(x/\eps)$.
\end{proof}

\begin{ex}\label{ex:imp} {\rm  Denote by $\tau$ the last entry of $\widetilde \si_1$. We consider the cases $P_2(x)=x-1,\; x+1,\; x^2+2x+2,\; x^2-2x+2$ with $(pos_2,neg_2)=(1,0),\,(0,1),\,(0,0),\,(0,0)$. When $\tau=+,$ then one has respectively  $\widetilde \si_2=(-),\, (+),\, (+,+),\, (-,+)$ and the sign pattern of  $\eps^{d_2}P_1(x)P_2(x/\eps)$ equals $(1,\widetilde \si_1,-),$ $(1,\widetilde \si_1,+),$ $(1,\widetilde \si_1,+,+),$ $(1,\widetilde \si_1,-,+)$. When $\tau=-,$ then one has respectively  $\widetilde \si_2=(+),\, (-),\, (-,-),\, (+,-)$ and the sign pattern of  $\eps^{d_2}P_1(x)P_2(x/\eps)$ equals $(1,\widetilde \si_1,+),$ $(1,\widetilde \si_1,-),$ $(1,\widetilde \si_1,-,-),$ $(1,\widetilde \si_1,+,-)$.  }  
\end{ex}

\begin{ex}\label{ex:6}{\rm The sign pattern $(1,-,-,-,+,+,+)$ is realizable with the pair $(0,4)$. Indeed by Lemma~\ref{lm:concat} with $P_2(x)=x+1$, this follows from the realizability of the pattern $(1,-,-,-,+,+)$ for $d=5$ and the pair $(0,3)$ in which case one can set $P(x)=x(x^2-1)^2+\eps -\eps^2(x^2+x^4),$ where $\eps>0$ is small.  
}
\end{ex}

\begin{lm}\label{lm:concat2}{\rm [}Second concatenation lemma{\rm ]}
Take  (not necessarily monic) polynomials $P_1(x)=\sum_{k=0}^{d_1} a_k x^k$ 
and $P_2(x) = \sum_{k=0}^{d_2}b_{k}x^k$  of degrees $d_1$ and $d_2$ 
respectively  with all non-vanishing coefficients.  
Assume that they have sign patterns  $\bsi_1=(\widetilde {\si_1},+)$ and 
$\bsi_2=(+,\widetilde {\si_2})$ respectively and 
realize the pairs $(pos_1,neg_1)$ and $(pos_2,neg_2)$.  (Here $\widetilde \si_1$ 
and $\widetilde \si_2$ are arbitrary sequences of $\pm$ of lengths $d_1$ and $d_2$.)  Then,
for $\eps>0$ small enough, the polynomial
$$P(x) = \left(\frac{1}{a_{d_1}}\sum_{k=0}^{d_1-1} a_k x^k\right)
         + x^{d_1} + 
         \frac{x^{d_1}}{b_0}\left(\sum_{k=1}^{d_2}b_{k}(\epsilon x)^k\right)$$
realizes the sign pattern $(\widetilde \si_1,+,\widetilde \si_2)$ and 
the pair $(pos_1+pos_2,neg_1+neg_2)$.
\end{lm}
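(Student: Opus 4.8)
The plan is to read $P$ as a two--scale (singular) perturbation of the \emph{normalised} polynomial $P_1/a_{d_1}$, in which exactly $d_2$ of its $d_1+d_2$ roots are sent off to infinity as $\eps\to 0^+$ and recaptured there as the roots of $P_2$. First I would simplify the defining formula: the first two summands combine to $\frac{1}{a_{d_1}}\bigl(a_{d_1}x^{d_1}+\sum_{k=0}^{d_1-1}a_kx^k\bigr)=P_1(x)/a_{d_1}$, so that
\begin{equation*}
P(x)=\frac{P_1(x)}{a_{d_1}}+\frac{x^{d_1}}{b_0}\sum_{k=1}^{d_2}b_k\eps^k x^k .
\end{equation*}
The hypotheses on the two sign patterns are precisely what guarantee that the shared junction entry of $\bsi_1$ has the sign of $a_{d_1}$ and the junction entry of $\bsi_2$ the sign of $b_0$, both $+$; hence dividing by $a_{d_1}$ and by $b_0$ disturbs no signs.

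Next I would record the sign pattern, which --- in contrast with Lemma~\ref{lm:concat}, where the honest product structure made the \emph{roots} immediate and the signs the work --- is here the easy half. For every $\eps>0$ the coefficient of $x^k$ in $P$ equals $a_k/a_{d_1}$ for $0\le k\le d_1-1$, equals $1$ for $k=d_1$, and equals $b_{k-d_1}\eps^{\,k-d_1}/b_0$ for $d_1+1\le k\le d_1+d_2$; no two contributions ever land in the same degree, so there is no cancellation, every coefficient is nonzero, and the signs read $\widetilde\si_1$, then $+$, then $\widetilde\si_2$. Thus $P$ carries the pattern $(\widetilde\si_1,+,\widetilde\si_2)$ for \emph{all} $\eps>0$, and only the root count will constrain $\eps$.

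The heart of the argument is the root count, which I would split into two disjoint regions. On a fixed disc $|x|\le R$ containing all roots of $P_1$, the perturbation term is $O(\eps)$, so $P\to P_1/a_{d_1}$ uniformly and, by Rouch\'e applied on $|x|=R$ and on small circles about each root of $P_1$, for small $\eps$ the polynomial $P$ has exactly $d_1$ roots there, clustered at the roots of $P_1$: its $pos_1$ positive and $neg_1$ negative \emph{simple} real roots persist as nearby simple real roots of the same sign, while its non-real roots stay off the axis. For the remaining $d_2$ roots I rescale $x=y/\eps$; a direct computation gives $\eps^{d_1}P(y/\eps)=\tfrac{y^{d_1}}{b_0}P_2(y)+O(\eps)$ uniformly on compact $y$--sets, so for small $\eps$ this has $pos_2$ positive and $neg_2$ negative simple real roots $y$ near the nonzero roots of $P_2$. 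Undoing the substitution turns these into roots $x=y/\eps$ of size $\sim 1/\eps$, hence lying outside $|x|\le R$, with signs inherited from $y$ since $\eps>0$. Choosing $\eps$ small enough for both regions at once, and noting that the two regions are disjoint and together exhaust all $d_1+d_2$ roots, $P$ has exactly $pos_1+pos_2$ positive and $neg_1+neg_2$ negative real roots.

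The main obstacle is exactly the singular character of the limit $\eps\to0$: the degree drops from $d_1+d_2$ to $d_1$ and $d_2$ roots escape to infinity, so plain continuity of roots must be supplemented by the rescaling that pins down their asymptotic location and, crucially, their signs. A subtler point is that a \emph{multiple} real root of $P_1$ (or of $P_2$) could split into a complex--conjugate pair under this perturbation and silently lower the count. This is why I would take the realizers $P_1,P_2$ with \emph{simple} real roots --- legitimate in the present framework, where realizability of a pair is witnessed inside the open strata $Pol^{\bsi}_{d,k}$ of polynomials with $k$ simple real roots --- so that every real root is stable and no sign is lost.
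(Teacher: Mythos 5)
Your proof is correct and follows essentially the same route as the paper's: the same simplification $P=\frac{P_1(x)}{a_{d_1}}+\frac{x^{d_1}}{b_0}\sum_{k=1}^{d_2}b_k(\eps x)^k$, the same remark that the sign pattern holds for \emph{every} $\eps>0$ because the three groups of coefficients occupy disjoint degrees, and the same two-scale limits $P(x)\to P_1(x)/a_{d_1}$ and $\eps^{d_1}P(y/\eps)\to y^{d_1}P_2(y)/b_0$ on compacta (your second limit is in fact stated more accurately than in the paper, where the factor $x^{d_1}$ is dropped and $b_0$ is misprinted as $d_0$). The differences are worth recording. First, the bookkeeping: you pin down all $d_1+d_2$ roots exactly via Rouch\'e --- exactly $d_1$ in a fixed disc near the roots of $P_1$, exactly $d_2$ at scale $1/\eps$ near the nonzero roots of $P_2$ (your remark that the rescaled polynomial has leading coefficient $b_{d_2}/b_0$ independent of $\eps$, so no root escapes past that scale, is the needed ingredient) --- whereas the paper derives only \emph{lower} bounds $pos_1+pos_2$, $neg_1+neg_2$, and a matching lower bound on the number of non-real roots (choosing neighborhoods $N_p$, $N_q$ off the real axis and using the dilated sets $\eps N_q$ to handle a possible common non-real root of $P_1$ and $P_2$), then lets the three bounds sum to the degree. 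Second, and more substantively: your explicit reduction to realizers with \emph{simple} real roots repairs a point the paper passes over with ``it is clear that'': the lemma is literally false for realizers with multiple real roots. For instance, $P_1=(x-1)^2$ with pair $(2,0)$ and $P_2=1+x$ with pair $(0,1)$ give $P(x)=\eps x^3+x^2-2x+1$, and since $\eps x^3>0$ near $x=1$ the double root dissolves into a complex pair, so $P$ realizes $(0,1)$, not $(2,1)$. Your justification of the reduction via the open strata $Pol^{\bsi}_{d,k}$ is a little quick, since realizability is defined counting multiplicities; the clean argument is that one may perturb the real roots of any realizer to be simple, which moves the coefficients only slightly and hence preserves the (nowhere-zero) sign pattern and the pair. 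With that one-line patch, your proof is complete and slightly stronger than the printed one.
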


\begin{proof}
Since $a_{d_1},b_0 > 0$ by assumption, the polynomial $P$ has the sign pattern
$(\widetilde \si_1,+,\widetilde \si_2)$ for all $\epsilon > 0$. 
 Notice that, pointwise (and uniformly on compact subsets),
 $$P(x) \rightarrow \frac{P_1(x)}{a_{d_1}},\quad \epsilon \rightarrow 0, \quad  \text{and} \quad  \epsilon^{d_1}P(x/\epsilon)\rightarrow \frac{P_2(x)}{d_0}, \quad \epsilon \rightarrow 0.$$
 Therefore it is  clear that for $\epsilon$ sufficiently small, 
 $P$ has at least $pos_1+pos_2$ positive roots, and at least $neg_1+neg_2$ negative
 roots.
 
 It remains to show that for $\epsilon$ small enough, 
 the number of non-real roots of $P(x)$ is equal to the sum of the numbers of non-real  
 roots of $P_1$ and $P_2$. 
 By continuity of roots, for each neighborhood $N_p$ of a non-real root $p$ of $P_1$
 of multiplicity $m_p$,
 there is a $t = t(p) > 0$ such that $P(x)$ has $m_p$ roots in $N_p$ if $\epsilon < t(p)$.
 Similarly, for each neighborhood $N_q$ of a non-real root $q$ of $P_2$
 of multiplicity $m_q$ there is a $t = t(q)>0$ such that $P(x/\epsilon)$ has $m_q$ roots 
 in $N_q$. This implies that $P(x)$ has $m_q$ roots in the dilated set $\epsilon N_q$, 
 for $\epsilon < t(q)$.
 For each non-real root $p$ of $P_1$, 
 choose its neighborhood $N_p$ such that all $N_p$'s are pairwise disjoint and
  do not intersect the real axis. Choose the neighborhoods $N_q$ of
 the non-real roots $q$ of $P_2$ similarly. 
 If $P_1$ and $P_2$ has a common non-real root, then we cannot choose the neighborhoods $N_p$'s and $N_q$'s as above 
 so that $N_p$ is disjoint from $N_q$ for every pair $p$ and $q$. However,
 for $\epsilon$ sufficiently small, the dilated sets $\epsilon N_q$ are
 disjoint from $N_p$ for any $p$ and $q$. Indeed, since the open sets $N_p$
 do not meet $\bR$, there is a neighborhood $N_0$ 
 of the origin disjoint from each $N_p$;
  for $\epsilon$ small enough we have that $\epsilon N_q \subset N_0$ implying the latter claim.
 
 The fact that $N_q\cap \bR = \emptyset$, implies that 
 $\epsilon N_q \cap \bR = \emptyset$ as well. 
 Therefore, we can conclude that, for $\epsilon$ small enough, all roots of
 $P(x)$ contained in any of the sets $\epsilon N_q$ or $N_p$,
 are non-real, which finishes the proof.
\end{proof}

\begin{proof}[Proof of Theorem~\ref{th:real}]

Part (i) is straightforward. Indeed, the necessity of the positivity of the constant term is obvious for monic polynomials of even degree with no real roots.  Moreover fix any even degree monic polynomial with  coefficients of the necessary signs 
and increase its constant term  until the whole graph of the polynomial will lie  strictly above the $x$-axis. The resulting polynomial   has no real roots and the required signs of its coefficients.  

For the need of the rest of the proof, observe that in the same way one constructs  polynomials for $d$ odd which realize 
an arbitrary  sign pattern with exactly one real root (positive or negative depending on the sign of the constant term).    
As above one starts with an arbitrary odd degree polynomial with a given pattern and then one either increases or decreases the constant term until the polynomial has a single simple real root.

Now we prove part (ii) by induction on $d$ and $k$. For $d=1,2,3$ the fact can be easily checked. For
$k=0$ with $d$ even and $k=1$  with $d$ odd the proof is given above. 

Suppose first that $k=d$, i.e., the polynomial has to be real-rooted. In this case
one applies Lemma~\ref{lm:concat} and Example~\ref{ex:imp} $d-1$ times with $P_2=x\pm 1$. 

If $k<d$, then consider the last three signs of the sign pattern. If they are
$(+,+,+)$, $(+,-,+)$, $(-,-,-)$ or $(-,+,-)$, then one can apply Lemma~\ref{lm:concat} and
Example~\ref{ex:imp} with $P_2=x^2\pm 2x+2$. This preserves $k$ and reduces $d$ by $2$. 

Suppose that they are $(+,-,-)$ or $(+,+,-)$. If $d$ is odd, then one applies
Lemma~\ref{lm:concat} and Example~\ref{ex:imp} with $P_2=x\pm 1$ and reduces the proof to the case with
$d-1$ and $k-1$ in the place of $d$ and $k$. As $d$ is odd and $k>1$, one
actually has $k>2$. If $d$ is even, then one applies Lemma~\ref{lm:concat} and Example~\ref{ex:imp}
twice, with $P_2=x+1$ and with $P_2=x-1$ or vice versa. One obtains the case of
$d-2$, $k-2$. If $k-2=0$, then the proof of the theorem follows from part (i).
If $k>2$, then the reduction can continue.

Suppose that the last three signs are $(-,+,+)$ or $(-,-,+)$. If $d$ is odd and
$k=1$, the proof follows from part (i). If $d$ is odd and $k>2$, then one can
apply Lemma~\ref{lm:concat} and Example~\ref{ex:imp} with $P_2=x\pm 1$ and reduce the proof to the case
$d-1$, $k>0$. 

If $d$ is even and $k=0$, then the proof follows from part (i). If $d$ is even
and $k>0$, then one applies Lemma~\ref{lm:concat} and Example~\ref{ex:imp} with $P_2=x\pm 1$ and one
reduces the proof to the case $d-1$.
\end{proof}

To prove Proposition~\ref{lm:hypell}, we need the following lemma having an independent interest. 

\begin{lm}\label{lm:Jens}
 For any $\bar \sigma,$ the intersection $Pol_{d,d}^{\bar\sigma}$
 is path-connected.
\end{lm}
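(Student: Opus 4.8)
The plan is to pass to coefficient coordinates and fiber $Pol_{d,d}^{\bsi}$ by its constant term. The first observation is the equality case of Descartes' rule: if all $d$ roots of $P$ are real, then $pos_P+neg_P=d=p_{\bsi}+n_{\bsi}$, which together with $pos_P\le p_{\bsi}$ and $neg_P\le n_{\bsi}$ forces $pos_P=p_{\bsi}=:m$ and $neg_P=n_{\bsi}=:n$. Thus every element of $Pol_{d,d}^{\bsi}$ has exactly $m$ positive and $n$ negative (nonzero) roots. Identifying a monic $P$ with its coefficient vector $(a_1,\dots,a_d)$, one has $Pol_{d,d}^{\bsi}=Pol_{d,d}\cap O_{\bsi}$, where $Pol_{d,d}$ is the open real-rooted locus and $O_{\bsi}=\{\operatorname{sign}a_k=\si_k\}$ is an open orthant.

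Next I would establish that the natural projection has interval fibers. Consider $\pi(a_1,\dots,a_d)=(a_1,\dots,a_{d-1})$, i.e.\ passage to $\tfrac1d P'$. Fixing $a_1,\dots,a_{d-1}$ and varying only $a_d$ translates the graph of $P$ vertically without moving its critical points (which depend on $P'$ alone); $P$ has $d$ distinct real roots precisely when each local maximal value is positive and each local minimal value is negative, and since these critical values are affine in $a_d$ with slope $1$, the admissible $a_d$ form an open interval. Intersecting with the half-line $\{\operatorname{sign}a_d=\si_d\}$ keeps it an interval. Hence the fibers of $\pi\colon Pol_{d,d}^{\bsi}\to B:=\pi\bigl(Pol_{d,d}^{\bsi}\bigr)$ are nonempty open intervals whose endpoints depend continuously on the base point; choosing the midpoint section and moving vertically inside each fiber shows that $Pol_{d,d}^{\bsi}$ is path-connected if and only if $B$ is.

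This reduces everything to the path-connectedness of the base $B$, which I expect to be the main obstacle. One checks that $B\subseteq Pol_{d-1,d-1}^{\bsi'}$, where $\bsi'=(\si_0,\dots,\si_{d-1})$ is $\bsi$ with its last entry deleted (differentiation preserves real-rootedness and only rescales $a_0,\dots,a_{d-1}$ by positive factors). The clean way to finish would be to prove $B=Pol_{d-1,d-1}^{\bsi'}$ and then induct on $d$. Unfortunately this surjectivity genuinely fails: $Q\in Pol_{d-1,d-1}^{\bsi'}$ lies in $B$ iff it admits a real-rooted antiderivative whose constant term has sign $\si_d$, and there exist real-rooted $Q$ with distinct nonzero roots having \emph{no} real-rooted antiderivative at all (for example a quintic with roots clustered as $\eps,\,1,\,1+\eps,\,t,\,t+\eps$ with $t$ large: the long increasing arch of the antiderivative over $(1+\eps,t)$ pushes a later local minimum above an earlier local maximum, so no vertical shift is real-rooted). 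Hence $B$ is a proper, more delicate subset.

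Consequently the real content of the lemma is the path-connectedness of $B$, and this is where the structure of $\bsi$ must be used. I would attempt it in one of two ways: either argue directly that $B$ is connected, or iterate the projection, peeling the coefficients $a_d,a_{d-1},\dots$ one at a time. The second route is attractive because $a_{d-1}$ is, up to a positive factor, the constant term of $\tfrac1d P'$, so real-rootedness of the derivative is again an interval condition in $a_{d-1}$; the crux then becomes showing that the additional \emph{liftability} constraint also varies monotonically and so carves out an interval at each stage, which would let connectivity propagate down to a trivially connected base. Making the liftability condition behave monotonically under these successive projections is, I expect, the hard technical point.
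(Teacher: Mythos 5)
You are candid that your argument is incomplete, and your two technical observations are correct: the fiber of the projection $\pi$ forgetting the constant term is an open interval with continuously varying endpoints (since all critical values are affine in $a_d$ with slope $1$), and the image $B$ is a \emph{proper} subset of $Pol_{d-1,d-1}^{\tsi'}$, because real-rooted polynomials with no real-rooted antiderivative exist --- your clustered-roots quintic does exhibit a later local minimum of the antiderivative forced above an earlier local maximum. But the path-connectedness of $B$, which you leave open, is not a technical remainder: it is the entire content of the lemma. Your fibration argument shows at most that $Pol_{d,d}^{\bsi}$ deformation retracts onto a section over $B$; iterating the projection peels off coefficients, but at each stage the liftability constraint reappears, and nothing you propose makes it an interval condition, let alone one varying monotonically. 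In fact your peeling scheme is essentially the ``Property A'' argument that the paper itself deploys in the proof of Proposition~\ref{lm:hypell}(i) --- and there it is used only to contract each \emph{connected component} of $Pol_{d,d}^{\bsi}$ to a point, with connectedness imported precisely from Lemma~\ref{lm:Jens}. So your route, completed as sketched, would at best reprove contractibility of components; it cannot exclude that $B$, hence $Pol_{d,d}^{\bsi}$, has several of them. (A minor additional wrinkle: the fiber intervals can be unbounded, e.g.\ for $d=2$ there is no local maximum, so the ``midpoint section'' needs replacing; that part is cosmetic.)

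The missing idea --- and the paper's actual mechanism --- is to deform an arbitrary $p\in Pol_{d,d}^{\bsi}$ into a fixed connected ``core'': the set of sign-independently real-rooted polynomials, characterized by lopsidedness, i.e.\ for each $k$ there is $x_k$ with $|a_kx_k^k| > \sum_{j\neq k}|a_jx_k^j|$; this set is logarithmically convex by \cite{PRS}, hence path-connected. The deformation tool is the polar derivative $p(x)+\frac{x}{\alpha}p'(x)$, whose $k$-th coefficient is $a_k\bigl(1+\frac{k}{\alpha}\bigr)$, so that for $\alpha>0$ it preserves both real-rootedness and the orthant $\bsi$. The induction is then run on the \emph{derivative}, not on the constant-term projection: by the induction hypothesis $p'$ is joined inside its stratum by a path $q_t$ to a sign-independently real-rooted $q_1$; by compactness of $[0,1]$ one finds $\alpha^*>0$ so that $p(x)+\frac{x}{\alpha^*}q_t(x)$ stays real-rooted with the right signs for all $t$, and $p(x)+\frac{x}{\alpha^*}q_1(x)$ is lopsided. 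Concatenating $\alpha\mapsto p(x)+\frac{x}{\alpha}p'(x)$ with $t\mapsto p(x)+\frac{x}{\alpha^*}q_t(x)$ lands every $p$ in the connected core, bypassing the liftability question entirely. Nothing in your proposal plays the role of this core, and without it the reduction to $B$ merely relocates the problem.
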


\begin{proof}
Recall that a real polynomial $p(x)$ is called sign-independently real-rooted if 
every polynomial obtained from $p(x)$ by an arbitrary sign change of its coefficients
is real-rooted. It is shown in \cite{PRS} that the logarithmic image of the set of 
all sign-independently real-rooted polynomials is convex.
 Hence the set of all sign-independently real-rooted polynomials itself is 
 logarithmically convex and in particular, it is path-connected. 
 The following criterion of sign-independently real-rootedness is straightforward. 

A real polynomial $p$ is sign-independently real-rooted if and only if, for every monomial $a_kx^k$ of $p(x),$ there exists a point $x_k$ such that
 \begin{equation}
 \label{eqn:lopsidedness}
  |a_kx_k^k| > \sum_{j\neq k}|a_jx_k^j|.
 \end{equation}
%

 Using induction on the degree $d,$ we will now prove  that, 
 for any polynomial $p\in Pol_{d,d}^{\bar\sigma},$ 
 there is a path $t\mapsto p_t$ such that
 (i) $p_0 = p$;  (ii) $p_1$ is sign-independently real-rooted; (iii)  $p_t \in Pol_{d,d}^{\bar\sigma}$ for all $t = [0,1]$.
 Since the set of all sign-independently real-rooted
 polynomials is path-connected, this claim settles Lemma~\ref{lm:Jens}.
 The case $d=1$ is trivial, as any linear polynomial
 is sign-independently real-rooted.
 
 Let $p$ be a real-rooted polynomial of degree $d$. Then, $q = p'$ is a 
 real-rooted polynomial of degree $d-1$. Hence, by the induction hypothesis,
 there is a path $t\mapsto q_t$ as above.
 Furthermore, since $p$ is real-rooted, so is its polar derivative
 $p'_\alpha(x) :=p(x) + \frac{x}{\alpha}p'(x)$
 for all $\alpha\in \bR^+$.
 
 For each $t\in [0,1]$, let $\alpha_t>0$ be such that
 $Q_{t,\alpha}(x) := p(x) + \frac{x}\alpha q_t(x)$ is
 real-rooted for any $0 <\alpha < \alpha_t$. By continuity
 of roots, $Q_{\hat t, \alpha_t}$ is real-rooted for $\hat t$ in
 a small neighborhood of $t$. Since $[0,1]$ is compact,
 we can find a finite set $\alpha_{t_1}, \dots, \alpha_{t_N}$
 such that $Q_{t,\alpha}(x)$ is real-rooted for all $t\in [0,1]$
 if $\alpha < \min(\alpha_{t_1}, \dots, \alpha_{t_N})$.
 
 Since $xq_1(x)$ is sign-independently real-rooted, for all $k$
 and all monomials $b_kx^k$ of $xq_1(x)$, there exists
 a point $x_k$ such that \eqref{eqn:lopsidedness} holds.
 Since the signs of $p(x)$ are equal to the signs of $xq_1(x)$,
 there exists an $\alpha_k>0$ such that \eqref{eqn:lopsidedness}
 holds for $Q_{1,\alpha}(x)$ for $k$ at $x_k$. However, since 
 \eqref{eqn:lopsidedness} always holds for the constant term with $x_0$ sufficiently small, we conclude that 
 $Q_{1,\alpha}$ is sign-independently real-rooted when  $\alpha < \min_{k=1, \dots ,d-1}\alpha_k $.

Now  fix a positive number 
  $\alpha^* < \min(\alpha_{t_1}, \dots, \alpha_{t_N}, \alpha_1, \dots \alpha_{d-1})$ and 
 consider the path composed of the two paths
 \[
  \alpha \mapsto p'_\alpha, \quad \alpha\in [\infty, \alpha^*]
 \qquad
\text{and}
 \qquad
  t \mapsto Q_{t,\alpha^*}, \quad t\in [0,1].
 \]
 By construction, this path is contained in $Pol_{d,d}^{\bar\sigma}$.
 Its starting point is $p(x)$ and its endpoint 
 $Q_{1,\alpha^*}$ is sign-independently
 real-rooted. This concludes the induction step. 
\end{proof}

\begin{proof}[Proof of Proposition~\ref{lm:hypell}]
To settle {\rm (i)} part 2 we notice that the set $Pol_{d,0}$
of all positive monic polynomials is a convex cone.  (Here $d$ is even.)
Therefore its intersection with any orthant is convex and contractible
(if nonempty).

To settle {\rm (i)} part 1, take a real-rooted polynomial $Q$
realizing a given
pattern. Consider the
family $Q+\lambda x^a$, $a=0,1,\ldots ,n-1$. Polynomials in this family  are real-rooted and with 
 the given sign pattern until either there is a confluence of roots of the polynomial, or   its  $a$-th
derivative vanishes at the origin. In both cases  further increase or decrease of the parameter 
$\lambda$ never brings us back to the set of real-rooted polynomials.

Thus the set $Pol^{\bar{\sigma}}_{d,d}$ has what we call  {\it Property A}: every its connected component   intersected with each line parallel to any coordinate axis
in the space of coefficients is either empty, or a point, or, finally,  an interval  whose endpoints  are
continuous functions of  other coefficients. (Indeed, they are values of the
polynomial or of its derivatives at roots of the polynomial or its derivatives; therefore 
these roots are algebraic functions 
depending continuously on the coefficients.)

Maxima and minima of such functions are also continuous.
Therefore the projection of each connected component of $Pol^{\bar{\sigma}}_{d,d}$ on each coordinate hyperplane
in the space of the coefficients 
also enjoys  { Property A}. (It suffices to fix the values of all coefficients
but one and study the endpoints of the segments as functions of 
that coefficient).

Now replace  $Pol^{\bar{\sigma}}_{d,d}$ by a smaller set obtained as follows. Choose some coefficient and, for fixed values of all other coefficients, substitute every nonempty intersection of $Pol^{\bar{\sigma}}_{d,d}$ 
with lines parallell to the  axis corresponding to the chosen coefficient  by the
half-sum of the endpoints, i.e., substitute the intersection segment by its middle point. This operation produces the graph of a continuous function
depending on the other coefficients. The projection of this graph to the coordinate hyperplane 
of  other coefficients is a domain having  Property A, but belonging to a
space of dimension $n-1$. Continuing  this process one contracts each connected component of the set $Pol^{\bar{\sigma}}_{d,d}$ to a point. 
Using Lemma~\ref{lm:Jens} we conclude that $Pol^{\bar{\sigma}}_{d,d}$ is 
path-connected and therefore contractible.

\medskip 
To prove {\rm (ii)}, it is enough to  settle the case
$\tsi=\hat + = (+,+,+,...,+)$.  Let us show that any compact subset in
$Pol_{d,k}^{\hat +}$ can be contracted to a point inside $Pol_{d,k}^{\hat +}$.
Observe that for any polynomial $p(x)$ with positive coefficients the family
of polynomials $p(x+t)$ where $t$ is an arbitrary positive number consists
of polynomials with all positive coefficients and the same number of 
real roots all being negative. Given a compact set $K\subset Pol_{d,k}^{\hat +}$, consider its
shift $K_t$ obtained by applying the above shift to the left on the distance
$t$, for $t$ sufficiently large. Then all real roots of all polynomials in the compact
set $K_t$ will be very large negative numbers and all complex conjugate pairs
will have very large negative real part. Therefore one can choose any specific
polynomial $\tilde p$ in $K_t$ and contract the  whole $K_t$ to $\tilde p$
along the straight segments, i.e., $\tau \tilde p+(1-\tau)p$ for any
$p\in K_t$. Obviously such contraction takes place  inside
$Pol_{d,k}^{\hat +}$.

Let us prove {\rm (iii)}.  It is clear that there is just one component (which is contractible) of real-rooted polynomials with all roots of the same sign. Suppose that they are all negative. To pass from degree $d$ to degree $d+1$ polynomials, from the pair $(0,d)$ to the pair $(1,d)$, one adds a positive root. One considers the polynomial
$$(x^d+a_1x^{d-1}+\cdots +a_d)(x-b), a_j>0, b\geq 0.$$
Its coefficients are of the form $c_j=a_j-ba_{j-1}, a_0=1$ (i.e., they are linear functions of the parameter $b>0$). Hence each of the coefficients except the first and the last one vanishes for some   $ b>0$ and then remains negative. As one must have for any $b>0$ exactly one sign change and never two consecutive zeros, it is always the last positive coefficient $c_j$ that vanishes. The value of $b$ for which a given coefficient $c_j$ vanishes depends  continuously on $a_i$  which implies the contractibility and uniqueness of the component with the pair $(1,d)$ with the different sign patterns.
 
We have just settled the real-rooted case with one sign change. Now we treat the non-real-rooted case. Fix a sign pattern with one sign change and with the pair $(1,d)$. One can realize it by a polynomial having all distinct  critical values. Hence when one decreases the constant term (it is negative, so the pattern does not change) the positive root goes to the right and the negative roots remain within a fixed interval $[-u,-v], u>0, v>0$.
When the constant term decreases, the polynomial loses consecutively
$[d/2]$ pairs of real negative roots and the realizable pairs become $(1,d-2), (1,d-4),...,(1,d-2[d/2])$ respectively.
\end{proof}

\medskip
Let us now prove the realizability for  a certain general class of pairs $(pos,neg)$.  
For a given sign pattern $\bsi,$ consider all possible sign patterns $\widetilde \bsi$ 
obtained from $\bsi$ by removing an arbitrary subset of its entries except for the 
leading $1$ and the last entry (constant term). On the level of polynomials
this corresponds to requiering that the corresponding coefficient vanishes.
For any such $\widetilde \bsi$, let $\widetilde {(pos,neg)}$ be its Descartes' pair, i.e., the number of its sign changes and the number of sign changes of the flip of $\widetilde\bsi$ (i.e., $P(-x)$). 

\begin{lm}\label{lm:stryk} Given an arbitrary sign pattern $\bsi$, 
all pairs $\widetilde {(pos,neg)}$   as above are realizable. 
\end{lm}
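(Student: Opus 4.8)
The plan is to treat $(\widetilde{pos},\widetilde{neg})$ as a target pair and produce a polynomial with the \emph{full} sign pattern $\bsi$ realizing it, in two stages: first build a lacunary (or almost lacunary) polynomial supported essentially on the retained exponents that realizes the split $(\widetilde{pos},\widetilde{neg})$ exactly, and then switch on the remaining coefficients as small perturbations carrying the signs dictated by $\bsi$. As a preliminary sanity check, note that deleting one entry of a sign pattern changes the number of sign changes of the pattern, and (since the exponents of the kept terms are unchanged) of its flip, by an even number; hence $\widetilde{pos}\le p_\bsi$, $\widetilde{neg}\le n_\bsi$ with the correct parities, so $(\widetilde{pos},\widetilde{neg})$ is indeed admissible for $\bsi$ and the statement is not vacuous.

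For the first stage I would label the retained exponents $e_0=d>e_1>\dots>e_m=0$ with signs $\tau_0,\dots,\tau_m$ and treat each gap separately. Writing $g_i=e_{i-1}-e_i$, the two-term polynomial $\pm(x^{g_i}\pm 1)$, with the two signs chosen to match the endpoint signs $\tau_{i-1},\tau_i$, realizes exactly the Descartes contribution of that gap: if $g_i$ is odd it has a single real root, positive when $\tau_{i-1}\neq\tau_i$ (a sign change) and negative when $\tau_{i-1}=\tau_i$; if $g_i$ is even it has the two roots $\pm1$ when $\tau_{i-1}\neq\tau_i$ and no real root when $\tau_{i-1}=\tau_i$. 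Summed over the gaps these contributions are precisely the sign changes of $\widetilde\bsi$ and of its flip, i.e. $(\widetilde{pos},\widetilde{neg})$. I would then glue the blocks together using the concatenation Lemma~\ref{lm:concat} (or Lemma~\ref{lm:concat2}); the realized pairs add, so the resulting polynomial $Q$ has exactly $\widetilde{pos}$ positive and $\widetilde{neg}$ negative simple roots, all $d-\widetilde{pos}-\widetilde{neg}$ remaining roots non-real, the coefficients at the retained exponents of size $O(1)$ and carrying the signs of $\widetilde\bsi$, and all other coefficients small.

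The second stage is a continuity-of-roots argument in the spirit of the proof of Lemma~\ref{lm:concat2}. Since every real root of $Q$ is simple and every non-real root is at positive distance from $\bR$, there is a neighborhood of $Q$ inside which the numbers of positive, negative and non-real roots are constant. I would move all the non-retained coefficients, which are already small, to small nonzero values with exactly the signs prescribed by $\bsi$, staying inside this neighborhood; the endpoint of this path is a polynomial with the full sign pattern $\bsi$ still realizing $(\widetilde{pos},\widetilde{neg})$, which proves the lemma.

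The only genuinely delicate point, and the one I would spend the most care on, is the sign bookkeeping in the gluing step. Lemma~\ref{lm:concat} flips the appended pattern whenever the running constant sign is $-$, so I must choose the per-block signs to absorb these flips and still leave each kept coefficient with its prescribed sign while preserving which root (positive or negative) each block contributes; the cleanest fix is to normalize via the standard $\bZ_2\times\bZ_2$-action, or to use Lemma~\ref{lm:concat2} gluing successive blocks at a shared $+$, so that no flips occur. The remaining care is quantitative: one must take the concatenation parameter small enough that the non-retained coefficients of $Q$ already lie within the root-count-preserving neighborhood, so that they can all be simultaneously adjusted to the signs of $\bsi$ without any real root leaving the real axis or any conjugate pair reaching it.
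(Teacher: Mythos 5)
Your route is genuinely different from the paper's. The paper's proof starts from a sign-independently real-rooted polynomial $P$ realizing the \emph{full} pattern $\bsi$ (these exist by the lopsidedness criterion \eqref{eqn:lopsidedness}, cf.~\cite{PRS}), deletes the unwanted monomials to form $\widetilde P$, and observes that the lopsidedness inequalities survive deletion, so that $\widetilde P(x_k)$ has the sign of $a_kx_k^k$ at each retained witness point; since $x_0<x_1<\dots<x_d$, this forces at least $\widetilde{pos}$ sign changes of $\widetilde P$ on $\bR_+$ and $\widetilde{neg}$ on $\bR_-$, and Descartes' rule pins these as the exact root counts, after which the deleted coefficients are switched on as small perturbations. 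Your multiplicative gap-block construction reaches the same lacunary model by hand, and your per-gap Descartes accounting is correct, as is your diagnosis of the flip bookkeeping (taking the block $x^{g_i}+c_i$ with $c_i=\tau_{i-1}\tau_i$ makes the sign at the retained slot $e_i$ equal to $c_1\cdots c_i=\tau_i$, and the real roots of each block then match exactly the gap's contribution to $(\widetilde{pos},\widetilde{neg})$). A minor point: neither Lemma~\ref{lm:concat} nor Lemma~\ref{lm:concat2} applies verbatim to your blocks, since both are stated for polynomials with all non-vanishing coefficients, while $x^{g_i}+c_i$ is lacunary; their proofs adapt, but you must rerun them rather than cite them. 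Also, the retained coefficients of $Q$ are not all of size $O(1)$ --- the trailing ones carry powers of $\eps$ --- though this is harmless since only their signs matter.

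The genuine gap is in your final perturbation step, and it is exactly the quantifier issue you flag without resolving. The non-retained coefficients of your $Q$ are already nonzero, of uncontrolled sign, and their magnitudes are tied to the same concatenation parameters that determine the root geometry of $Q$: as those parameters shrink, the small real roots of the later blocks cluster at scale $\eps$ near the origin, and the radius of the root-count-preserving neighborhood of $Q$ degenerates along with them. So the sentence ``take the concatenation parameter small enough that the non-retained coefficients already lie within the root-count-preserving neighborhood'' compares two quantities that both tend to $0$, and without an explicit rate estimate it is not a valid choice of quantifiers --- you cannot pick the stray values \emph{after} fixing $Q$, which is precisely the freedom the paper's proof retains: there the lacunary model $\widetilde P$ has the deleted coefficients exactly zero, so the final perturbation is centered at the origin in those coordinates and its size is a free parameter chosen last. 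The fix inside your own scheme is to interleave the two stages: after each single concatenation, immediately perturb the newly created small slots to the signs prescribed by $\bsi$, with smallness chosen after that step's polynomial is fixed (legitimate, since its real roots are simple and nonzero and its remaining roots are strictly non-real), so that every subsequent gluing is applied to a polynomial with a full sign pattern. With that reordering your argument closes; as written, the last step does not.
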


\begin{proof}
Recall that a sign-independently real-rooted polynomial is a real 
univariate polynomial such that it has only real roots and the 
same holds for an arbitrary sign change of its coefficients, 
see \cite {PRS}. As we already mentioned  a polynomial 
$p(x)=\sum_{k=0}^d a_kx^k$ is  sign-independently real-rooted if and 
only for each $k=0,\dots, d,$  there exists $x_k\in \bR_+$ such that 
\[
|a_kx_k^k| \ge  \sum _{l\neq k} |a_lx^l_k|.
\]
Let $P(x)$ be a 
sign-independently real-rooted polynomial with the given sign pattern 
$\bsi$. For each $\widetilde \bsi$, let $\widetilde P(x)$  denote 
the polynomial obtained by deleting those
 monomials from $P(x)$ which correspond to components of $\bsi$
 deleted when constructing $\widetilde \bsi$. Clearly the above inequality 
holds even for $\widetilde P(x)$ since we are removing monomials 
from its right-hand side. Therefore the sign of $\tilde p(x_k)$ 
equals that of $a_kx^k_k$. Since $x_0 < x_1 < \dots < x_d$, this 
implies that $\widetilde P(x)$ has at least $\widetilde {pos}$ sign 
changes in $\bR_+$. Similarly, we find that $\widetilde P(x)$ has
at least $\widetilde {neg}$ sign changes in $\bR_-$. However, by Descartes' rule of signs, this is the maximal number of positive and negative roots respectively. Hence, this is the exact number of positive and
negative roots of $\widetilde P(x)$. Therefore pertubations of the coefficients do not change the number of real roots.
\end{proof}

\begin{prop}\label{pr:Jens}
 Given an arbitrary sign pattern $\bsi$,
 any its admissible pair $(pos,neg)$ satisfying the condition
 $$\min(pos, neq) > \left\lfloor \frac{d-4}{3}\right\rfloor$$
 is realizable.
\end{prop}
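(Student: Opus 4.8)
The plan is to reduce the realizability question to a purely combinatorial statement about sign sequences, using the first concatenation lemma (Lemma~\ref{lm:concat}) together with the fact (Theorem~\ref{th:AlFu}(i)) that in degree $\le 4$ every admissible pair is realizable. First I would record the relevant bookkeeping. Cut the pattern $\bsi=(1,\tsi)$ into consecutive blocks and build the corresponding monic factors $P_1,\dots,P_N$ of degrees $d_1,\dots,d_N$ with $\sum_i d_i=d$; iterating Lemma~\ref{lm:concat} shows that a suitably scaled product realizes $\bsi$ with the pair $\bigl(\sum_i pos_i,\ \sum_i neg_i\bigr)$, where the $i$-th factor realizes $(pos_i,neg_i)$ for its own (possibly globally flipped) pattern. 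A short induction on $N$ gives $\sum_i p^{(i)}=p_\bsi$ and $\sum_i n^{(i)}=n_\bsi$, where $(p^{(i)},n^{(i)})$ is the Descartes pair of the $i$-th block; moreover $p^{(i)}$ is odd exactly when the last sign of that block is negative, and $n^{(i)}$ is odd exactly when $(-1)^{d_i}$ times that last sign is negative.

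If one now takes all $d_i\le 4$, then every block realizes each of its admissible pairs, and the choices of positive and negative counts decouple across blocks. Hence the set of pairs realizable for $\bsi$ in this way is exactly the ``box''
$$\{\,pos : P_{\min}\le pos\le p_\bsi,\ pos\equiv p_\bsi\,\}\ \times\ \{\,neg : N_{\min}\le neg\le n_\bsi,\ neg\equiv n_\bsi\,\},$$
where $P_{\min}=\#\{i: p^{(i)}\text{ odd}\}$ and $N_{\min}=\#\{i: n^{(i)}\text{ odd}\}$. It therefore suffices to exhibit, for the given target, a decomposition into blocks of degree $\le 4$ with $P_{\min}\le pos$ and $N_{\min}\le neg$. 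Since the hypothesis forces $pos,neg>\lfloor(d-4)/3\rfloor$ (and in particular $p_\bsi,n_\bsi>\lfloor(d-4)/3\rfloor$, so patterns with too few sign changes or preservations never occur here), it is enough to arrange $P_{\min},N_{\min}\le\lfloor(d-4)/3\rfloor+1$.

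To bound $P_{\min}$ I would cut greedily so that block right-ends fall on $+$ signs whenever possible, each such end contributing nothing to $P_{\min}$, and take even block lengths so as to also avoid contributing to $N_{\min}$. A right-end is forced onto a $-$ only inside a long run of minuses, and symmetrically for $N_{\min}$ after the $(-1)^{d_i}$ twist; counting these forced ends against the total degree is what produces the factor $\tfrac13$, since roughly three units of degree are consumed per unavoidable parity defect. The small degrees $d\le 6$, where the threshold equals $0$, are disposed of directly by Theorem~\ref{th:AlFu}.

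The hard part will be this last step: bounding $P_{\min}$ and $N_{\min}$ \emph{simultaneously}. Lowering $P_{\min}$ wants block-ends on $+$'s, while lowering $N_{\min}$ wants them, after the parity twist, on the complementary signs, and these two demands compete; one must show that every pattern meeting the hypothesis admits a single degree-$\le 4$ cutting satisfying both bounds at once. I expect this optimization to be the genuinely delicate combinatorial core, whereas the reduction and the box description above are routine. An essentially equivalent route replaces the concatenation step by Lemma~\ref{lm:stryk}: there one instead deletes interior coefficients, each deletion lowering $(p_\bsi,n_\bsi)$ by $(2,0)$, $(0,2)$, $(2,2)$, or $(0,0)$ according to whether the deleted sign is a local spike and to the parities of the two merged gaps, and the same counting governs how many independent deletions of each type can be guaranteed.
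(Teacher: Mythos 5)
Your reduction is sound as far as it goes: iterating Lemma~\ref{lm:concat} over a cutting of $\bsi$ into blocks of degree at most $4$, invoking Theorem~\ref{th:AlFu}(i) blockwise, and observing that the realizable pairs then fill the parity box between $(P_{\min},N_{\min})$ and $(p_\bsi,n_\bsi)$ is all correct, including the flip bookkeeping after a block ending in $-$. But the proof is not complete, and the gap is exactly where you place it: everything hinges on the claim that every pattern admits a \emph{single} degree-$\le 4$ cutting with $P_{\min}$ and $N_{\min}$ simultaneously at most $\lfloor (d-4)/3\rfloor+1$, and you leave this as an expectation rather than prove it. The greedy sketch does not settle it: after a block ends in $-$, Lemma~\ref{lm:concat} negates all subsequent signs, so ``right-ends on $+$'' is a condition in moving (flipped) coordinates; moreover, with blocks of even degree one has $p^{(i)}\equiv n^{(i)} \pmod 2$, so each block contributes to $P_{\min}$ and $N_{\min}$ either both or neither, which means a parity defect can never be charged to just one of the two counts --- this is the structural reason the two demands fight, and no invariant in your sketch controls both at once. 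The step carrying the entire quantitative content of the proposition is therefore missing.

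The idea that closes this gap --- and it is the paper's --- is to use blocks of \emph{odd} degree at \emph{fixed} cut positions, so that no optimization over cuttings is needed at all. The paper cuts $\bsi$ into $N=\lfloor(d-4)/3\rfloor+1$ windows $\tau_k=(\sigma_{3k+1},\dots,\sigma_{3k+4})$ overlapping in one entry, glued by the \emph{second} concatenation lemma (Lemma~\ref{lm:concat2}) rather than the first. Each window is a degree-$3$ pattern, and a degree-$3$ pattern, whatever its signs, has minimal admissible pair $u_k=(1,0)$ or $(0,1)$: exactly one forced real root, of a parity-determined sign. Hence the forced vector $\sum_k u_k$ has coordinate \emph{sum} equal to the number of blocks, so each coordinate is at most $N\le\min(pos,neg)$ by hypothesis, no matter how the forced signs fall --- precisely the bound you were trying to engineer by choosing cuts, obtained here for free because with odd blocks each defect counts against exactly one coordinate and the degree bounds their sum (this is where $\lfloor(d-4)/3\rfloor$ comes from). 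The parity-matching discrepancy $(2a,2b)$ is then absorbed by upgrading $a$ blocks to $u_k+(2,0)$ and $b$ blocks to $u_k+(0,2)$, which is possible since $pos\le p_\bsi=\sum_k p^{(k)}$ and likewise for $neg$, and blockwise realizability in degree $\le 3$ (Lemma~\ref{lm:stryk}, or your Theorem~\ref{th:AlFu}(i)) finishes the argument. If you wish to salvage your route, the fix is to replace your even-degree blocks by degree-$3$ blocks at fixed positions and Lemma~\ref{lm:concat} by Lemma~\ref{lm:concat2} to handle the shared coefficient (normalizing its sign by negating block polynomials, which does not change root counts); as written, your proposal establishes the reduction but not the proposition.
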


\begin{proof}
 Notice first that, if $d\leq 3$, then
 $\left\lfloor \frac{d-4}{3}\right\rfloor < 0.$
 Thus we need to prove  that any admissible pair is realizable in this case.
 Indeed,  using Lemma \ref{lm:stryk}, this is straightforward to check.
 
 For arbitrary $d$, let us decompose $\bsi$ in the following manner.
 Let 
 $$\tau_k = (\sigma_{3k+1}, \dots, \sigma_{3k+4}),
 \quad k = 0, \dots, \left\lfloor \frac{d-4}{3}\right\rfloor,$$
 (where we use slight abuse of notation -- the last pattern needs not
 be of length $4$). Then, for each $\tau_k$, the admissible
 pairs are among the pairs
 $$(1,0), (1,2), (3,0), (0,1), (2,1), \text{ and, }(0,3),$$
 and for each $\tau_k$ all admissible pairs are realizable because they
 correspond to the case $d\le 3$.
 
 For each $\tau_k,$ associate initially an admissible
 pair $u_k = (1,0)$ or $u_k=(0,1)$ depending
 on whether $\tau_k$ admits an odd number of positive roots 
 and an even number of negative roots, or vice versa.
 By assumption,
 $$\sum_{k} u_k \leq (pos,neg)$$
 (where the inequality should be understood componentwise).
 If this is not an equality, then the difference is of the
 form $(2a,2b)$, where $a+b\leq \lfloor \frac{d-4}{3}\rfloor$,
 since the original pair $(pos,neg)$ is admissible.
 Define
 $$v_k = u_k + (2,0), \quad k =0, \dots, a-1,$$
 $$v_k = u_k + (0,2), \quad k = a, \dots, a+b-1,$$
 $$v_k = u_k, \quad k = a+b, \dots, \left\lfloor\frac{d-4}{3}\right\rfloor.$$
 Then, $v_k$ is an admissible pair for $\tau_k$, and in addition
 $$\sum_{k} v_k = (pos,neg).$$
 Applying Lemma \ref{lm:concat2} repeatedly to the patterns $\tau_k,$ we
 prove Proposition~\ref{pr:Jens}.
\end{proof}

\medskip
For $d$ odd, consider the sign patterns $\bsi=(1,\tsi)$ of the form: a) 
the last entry is $+$; b) all other entries at even positions are $-$; c) 
there is at most one sign change in the group of signs at odd positions.
Example, $(1,-,+,-,-,-,-,-,-,+)$.

\begin{lm}
\label{lm:jens2} 
Under the above assumptions if the pair has no positive roots, then it has exactly one negative, i.e., of all pairs $(0,s)$ only $(0,1)$ is realizable.
\end{lm}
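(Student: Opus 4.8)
The plan is to separate the statement into two parts: that $(0,1)$ is realizable, and that the hypothesis $pos=0$ forces $neg=1$ (so that no pair $(0,s)$ with $s\ge 3$ survives). Realizability of $(0,1)$ is immediate from Theorem~\ref{th:real}(ii): since $d$ is odd, $Pol^{\tsi}_{d,1}$ is nonempty, and for any polynomial in it the unique simple real root is negative, because a monic odd-degree polynomial with positive constant term satisfies $P(0)>0$ while $P(-\infty)=-\infty$ and $P(+\infty)=+\infty$, so its single real crossing lies to the left of the origin. Thus the realized pair is $(0,1)$.

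For the forcing statement, write $P(x)=\sum_j c_j x^j$ and split $P=A+B$ into its even-degree part $A(x)=c_0+c_2x^2+\cdots+c_{d-1}x^{d-1}$ and its odd-degree part $B(x)=c_1x+c_3x^3+\cdots+c_dx^d$. Assumptions (a)--(b) give $c_0>0$ and $c_2,\dots,c_{d-1}<0$, so $A$ is strictly decreasing on $[0,\infty)$; assumption (c) says that the coefficients $c_1,c_3,\dots,c_d$, read in increasing degree, form a block of minuses followed by a block of pluses. The key reformulation is that the negative roots of $P$ are exactly the positive roots of $Q(x):=P(-x)=A(x)-B(x)$, together with the identity $Q=P-2B$. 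I would then count the positive roots of $Q$ under the hypothesis $pos=0$, i.e. $P>0$ on $(0,\infty)$.

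In the principal case, where the odd coefficients genuinely change sign, $B(x)=x\,\beta(x^2)$ has a single positive root $s$ (one sign change of $\beta$), with $B<0$ on $(0,s)$ and $B>0$ on $(s,\infty)$. On $(0,s)$ the identity $Q=P-2B$ forces $Q>0$ (as $P>0$ and $B<0$), and $Q(s)=P(s)>0$. To control $(s,\infty)$, I would locate the unique positive critical point $s'$ of $B$: the coefficients of $B'$ inherit the same single sign change, $B'(0)=c_1<0$, and the leading coefficient is positive, so $B'<0$ on $(0,s')$ and $B'>0$ on $(s',\infty)$; moreover $s'<s$, since $B$ dips below zero before returning through $s$. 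On $(s',\infty)$ one then has $A'<0$ and $B'>0$, hence $Q'=A'-B'<0$, so $Q$ is strictly decreasing on $[s,\infty)\subset(s',\infty)$ from $Q(s)>0$ down to $-\infty$, contributing exactly one root. Combined with $Q>0$ on $(0,s)$, this gives exactly one positive root of $Q$, i.e. $neg=1$. The degenerate case, where all odd coefficients are positive, is quicker: there $B>0$ and $B'>0$ on $(0,\infty)$, so $Q'=A'-B'<0$ throughout and $Q$ decreases monotonically from $Q(0)=c_0>0$ to $-\infty$, again a single root.

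The main obstacle is conceptual rather than computational: Descartes' rule applied directly to $Q$ is worthless here, since the all-negative even coefficients of $Q$ interleave with its positive low-degree odd coefficients to create many sign changes. The hypothesis $pos=0$ must therefore be injected analytically, through $Q=P-2B$, and the genuine work lies in the monotonicity bookkeeping of the sign-change case --- proving $s'<s$ and $A'-B'<0$ beyond $s'$ --- which is exactly what pins the positive root of $Q$ down to being unique.
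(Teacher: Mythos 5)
Your argument is correct, and at its core it is the same proof as the paper's: the identical even/odd decomposition (your $A,B$ are the paper's $P_{ev},P_{od}$), positivity of $P(-x)$ on an initial interval, and strict decrease of $P(-x)$ to $-\infty$ past the unique positive critical point $s'$ of the odd part, with the ordering $s'<s$ pinning the crossing down to a single simple root. Still, three local deviations are worth recording. First, the paper obtains positivity of $P(-x)$ on $[0,x_{od}]$ indirectly: it proves $x_{od}<x_{ev}$ by deriving a contradiction with $pos=0$ on $[x_{ev},x_{od}]$, and then uses $P_{ev}>0$, $P_{od}\le 0$; your identity $Q=P-2B$ feeds the hypothesis $P>0$ in directly and dispenses with the even part's root $x_{ev}$ altogether --- a genuine streamlining. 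Second, for $s'<s$ (the paper's $x'_{od}<x_{od}$) the paper runs the comparison $(P_{od}^+)'(x)>\delta P_{od}^+(x)\ge \delta P_{od}^-(x)>(P_{od}^-)'(x)$, whose outer inequalities are asserted without proof and, as printed, are off by a factor of $x$ (one needs $(P_{od}^+)'(x)\ge (\mu/x)P_{od}^+(x)$ with $\mu>\delta$ the least degree occurring in $P_{od}^+$, and the mirror bound for $P_{od}^-$); your Rolle argument on $[0,s]$, combined with the Descartes-type uniqueness of the positive zero of $B'$, is both shorter and airtight. Third, you cover two items the paper leaves implicit: the degenerate branch of hypothesis (c) with no sign change among the odd coefficients (the paper's proof tacitly assumes $P_{od}$ has a positive root), and the realizability of $(0,1)$ itself, which the word ``only'' in the statement requires and which you correctly extract from Theorem~\ref{th:real}(ii) together with the sign of the constant term. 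In sum: same mechanism as the paper, with a cleaner execution of the two middle steps and a more complete treatment of the edge cases.
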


\begin{proof} Let us decompose a polynomial $P(x)$ with the sign pattern $\bsi$ as above into $P_{od}(x)$ and $P_{ev}(x)$ where $P_{od}(x)$ (resp. $P_{ev}(x)$) contains all odd (resp. even) monomials of $P(x)$. Then obviously, $P_{ev}, P_{od}$ and $P^\prime_{od}$ have one positive root each which we denote by $x_{ev}, x_{od}$ and $x^\prime_{od}$ respectively. We first claim that $x^{\prime}_{od}<x_{od}<x_{ev}$.  Indeed, assume that $x_{ev}\le x_{od}$. Then both $P_{od}$ and $P_{ev}$ are non-positive on the interval $[x_{ev}, x_{od}]$.  Therefore also $P(x)$ would be non-positive on the same interval which contradicts to the assumption that  $P(x)$ is positive,  for all positive $x$. Now we prove that $x^{\prime}_{od}<x_{od}$. Present $P_{od}=P_{od}^+ - P_{od}^-,$ where $P_{od}^+$ is the sum of all odd degree monomials with positive coefficients and $P_{od}^-$ is the negative of the sum of all odd degree monomials with negative coefficients. Observe that the degree of the smallest monomial in $P_{od}^+$ is larger than $\delta=\deg P_{od}^-$ by assumption. 

Now if $P(x)\ge 0,$ i.e., $x\ge x_{od}$ then 
$$(P_{od}^+)^\prime(x) >\delta P_{od}^+(x)\ge \delta P_{odd}^-(x)>(P_{od}^-)^\prime(x)$$
which implies that $P^\prime_{od}(x)>0$, and hence $x>x^\prime_{od}$. 

Finally we  show that $P(x)$ has at most one negative root.  Consider the interval $[0,x_{od}]$. Since $x_{ev}>x_{od}, $ then $P_{ev}>0$ in $[0,x_{od}]$. Additionally,  $P_{od}$ is non-positive in this interval, implying that $P(-x)=P_{ev}(x)-P_{od}(x)$ is positive in the interval $[0,x_{od}]$. In the interval $[x^\prime_{od},+\infty)$, the polynomial $P^\prime_{od}$ is positive which together with the fact that $P^\prime_{ev}$ is negative implies that $P^\prime(-x)=P^\prime_{ev}-P^{\prime}_{od}$ is negative.   Thus being positive in $[0,x_{od}]$ and monotone decreasing to $-\infty$  in $[x^\prime_{od},+\infty)$, $P(-x)$ necessarily has exactly one positive root.  
\end{proof}
 
\begin{proof}[Proof of Proposition~\ref{prop:even}]  Suppose that a polynomial $P$ realizes a given sign pattern $\bsi$ with the pair $(2k,0),$ where $0<k\le l$. Then $P(0)>0$ and there exists $a>0$ such that $P(a)<0$. Hence $P(-a)<0$ since the monomials of even degree attain  the same value at $a$ and $-a$ while odd degree monomials have smaller values at $-a$ than at $a$ by our assumption on the signs. Thus there exists at least one negative root which is a contradiction. 

We prove that any pair of the form $(2s,2t)$, $0<s\le l,$ $0<t\le d/2 -l$ is realizable with the given sign pattern satisfying the assumption of Proposition~\ref{prop:even}. We make use of Lemma~\ref{lm:concat} and Example~\ref{ex:imp}. Represent the considered sign pattern $\bsi$ in the form $\bsi=(1,\widetilde \si_1, +)$, where the last entry of $\widetilde \si_1$ is $+$. (For $d=2$,  $\widetilde \si_1=+$.) Then the realizability of $\bsi$ with the pair $(2s,2t)$ follows from that of  $\bsi^\prime=(1,\widetilde \si_1)$ with the pair $(2s,2t-1),$ see Example~\ref{ex:imp} with $P_2(x)=x+1$.

 The sign pattern $\bsi^\prime$ has an even number of entries (including the leading  $1$).   Denote by $\bsi^\prime(r)$ the sign pattern obtained by truncation of the last $2r$ entries of $\bsi^\prime$.  (In particular, $\bsi^\prime(0)=\bsi^\prime$.) Below we will provide an algorithm which shows how realizability of  $\bsi^\prime(r)$ implies that of $\bsi^\prime(r-1)$. We also indicate how the pairs change in this process.  It is clear that $\bsi^\prime(d/2-1)=(1,+)$.  The corresponding pair is $(0,1)$ and it is realizable by the polynomial $x+1$.
 
 The sign pattern $\bsi^\prime$ contains $l$ minuses all of which occupy only odd positions. We mark the leftmost $s$ of them.  We distinguish between the following three cases according to the last two entries of $\bsi^\prime(r-1)$. Notice that the last entry of $\bsi^\prime(r)$ is always $+$.
 
 Case a) $\bsi^\prime(r-1)=(\bsi^\prime(r),-,+)$ and the  minus sign in the last but one position is marked. In this case one applies Example~\ref{ex:imp} twice,  each time  with $P_2(x)=x-1$. If the pair of $\bsi^\prime(r)$ equals $(u,v),$ then the one of  $\bsi^\prime(r-1)$ equals $(u+2,v)$.  
 
 Case b) $\bsi^\prime(r-1)=(\bsi^\prime(r),-,+)$ and the  minus sign in the last but one position is not marked. In this case one applies Example~\ref{ex:imp}   with $P_2(x)=x^2-2x+2$. The pairs of $\bsi^\prime(r-1)$ and of $\bsi^\prime(r)$ are the same.   
  
 Case c) $\bsi^\prime(r-1)=(\bsi^\prime(r),+,+)$. If $v<2t,$ then one applies Example~\ref{ex:imp} twice, each time with $P_2(x)=x+1$. The pair of $\bsi^\prime(r-1)$ equals $(u,v+2)$. If $v= 2t$, then one applies Example~\ref{ex:imp} with $P_2(x)=x^2+2x+2$. The pairs of $\bsi^\prime(r-1)$ and of $\bsi^\prime(r)$ are the same.  
  
 Observe that any sign pattern $\bsi$ satisfying the assumptions of Proposition~\ref{prop:even} can be obtained from the initial $(1,+)$ by applying consecutively $d/2-1$ times the appropriate of the Cases a) -- c).  Notice that we add exactly $2s$ positive and $2(t-1)$ negative roots. Another negative root comes  from $\bsi^\prime(d/2-1)$ and the last one is obtained  when passing from $\bsi^\prime$ to $\bsi$. Hence the pattern and the pair are realizable. 
 \end{proof}
 
 \begin{proof}[Proof of Proposition~\ref{prop:3parts}]  To prove  (i) we show that the three-part sign 
 pattern $\bsi$ satisfying the assumptions of Proposition~\ref{prop:3parts}, is not realizable by a 
 polynomial $P(x)$ having $d-2$ negative and a double positive root. By a linear change of $x$ the latter 
 can be assumed to be equal to $1$: 
 $$P(x)=(x^2-2x+1)S(x),\quad \text{where}\quad S(x)=x^{d-2}+a_1x^{d-3}+\cdots+a_{d-2}.$$
 Here $a_j>0$ and the factor $S(x)$ has $d-2$ negative roots. The coefficients of $P(x)$ are equal to
 $$1, a_1-2, a_2-2a_1+1, a_3-2a_2+a_1, \dots, a_{d-2}-2a_{d-3}+a_{d-4},-2a_{d-2}+a_{d-3},a_{d-2}.$$
 We want to show that it is impossible to have both inequalities: 
 $$a_m-2a_{m-1}+a_{m-2}<0\quad (*)\quad\text{and}\quad a_{m+n-1}-2a_{m+n-2}+a_{m+n-3}<0\quad(**)$$
 satisfied. 
 
 Now consider a polynomial having $d-2$ negative roots and a complex conjugate pair. If the polynomial has 
 at least one negative coefficient, then its factor having complex roots must be of the form $x^2-2\be x+\be^2+\gamma$, 
 where $\be>0$ and $\gamma >0$. A linear change of $x$ brings the polynomial to the form 
 $$Q(x)=(x^2-2x+1+\delta)S(x), \;\delta >0.$$ 
 The coefficients of $Q(x)$ are obtained from that of $P(x)$ by adding the ones of the polynomial 
 $\delta S(x)$. If inequality $(**)$ fails, then the coefficient of $x^{d-m-n+1}$ in $Q(x)$ is positive 
 (it equals $a_{m+n-1}-2a_{m+n-2}+a_{m+n-3}+\delta a_{m+n+1}>0$). So the sign pattern of $Q(x)$ is different 
 from $\bsi$.  If inequality $(**)$ holds, then inequality $(*)$ fails and the coefficient of $x^{d-m}$ in 
 $Q(x)$ is non-negative, so $Q(x)$ does not have  the sign pattern $\bsi$. 
 
 The polynomial $S(x)$ being real-rooted, its coefficients satisfy the Newton inequalities: 
 $$\frac{a_k^2}{{d-2 \choose k}^2} \ge \frac{a_{k-1}a_{k+1}}{{d-2 \choose k+1}{d-2\choose k-1}},\quad \; k=1, \dots, d-3
 \quad (\text {we set}\quad a_0=1).$$
 
   Here $$\kappa = \frac{{d-2\choose m}{d-2\choose m+n-3}}{{d-2\choose m-1}{d-2\choose m+n-2}} = \frac{d-m-1}{m}\cdot\frac{d-p-1}{p},$$
   i.e., $a_ma_{m+n-3}\ge \kappa a_{m-1}a_{m+n-2}.$ 
   Inequalities $(*)$ and $(**)$ imply respectively $$a_m<2a_{m-1}\quad  \text{and}\quad a_{m+n-3}<2a_{m+n-2}.$$ 
   Thus $a_ma_{m+n-3}\ge \kappa a_{m-1}a_{m+n-2} > \frac{\kappa}{4} a_{m}a_{m+n-3}$, 
   which is a contradiction since $\kappa \geq 4$ by assumption.   
   
   To prove (ii) we use Lemma~\ref{lm:concat} and Example~\ref{ex:imp}.   We construct sign patterns $\bsi(0)=\bsi, \bsi(1), \dots ,$ each being a truncation of the previous one (by one, two or three according to the case as explained below),  and corresponding pairs $(u_j, v_j)$, $j=0,1,2,\dots ,$ where $(u_0,v_0)=(2,v),$ such that realizability of $\bsi(j-1)$ with $(u_{j-1},v_{j-1})$ follows from the one of $\bsi_j$ with $(u_j,v_j)$. For convenience we write instead of the pair $(u_j,v_j)$ the triple $(u_j,v_j,w_j)$, where $w_j$ is the number of complex conjugate pairs of roots (hence $u_j+v_j+2w_j=d_j$, where $d_j+1$ is the number of entries of $\bsi(j))$. 
   
   We consider first the case $v\ge 2$. The necessary modifications in the cases $v=0$ and $v=1$ are explained at the end of the proof. 
   
   If $\bsi(j-1)$ has not more than 3 entries, then we do not need to construct the sign pattern $\bsi(j)$. Two cases are to be distinguished: 
   
   \medskip
   Case A. If $\bsi(j-1)$ has only 2 entries, then these are either $(1,+)$ or $(1,-)$, and $\bsi(j-1)$ is realizable respectively by the polynomials $x+1$ or $x-1$ with $(u_{j-1},v_{j-1},w_{j-1})=(0,1,0)$ or $(1,0,0)$.

   \medskip 
   Case B. If $\bsi(j-1)$ has only 3 entries, then they can be only $(1,+,+)$, $(1,+,-)$ or $(1,-,-)$. In the first case $(\bsi(j-1),(0,2,0))$ is realizable by $(x+1)(x+2)$ and $(\bsi(j-1), (0,0,1))$ by $x^2+2x+2$.  In the second case  $(\bsi(j-1),(1,1,0))$ is realizable by $(x+2)(x-1)$ and in the third case $(\bsi(j-1),(1,1,0))$ by $(x+1)(x-2)$.

Suppose that $\bsi(j-1)$ contains more than 3 entries. The following cases are possible: 

\medskip
Case C. 
The last three entries of $\bsi(j-1)$ are $(+,+,+)$ or $(-,-,-)$. If $w_{j-1}>0$, then we apply Lemma~\ref{lm:concat} with $P_2(x)=x^2+2x+2$ and we set $(u_j,v_j,w_j)=(u_{j-1},v_{j-1},w_{j-1}-1)$. If $w_{j-1}=0$, then we apply Lemma~\ref{lm:concat} twice, both times  with $P_2(x)=x+1$.  We set $(u_j,v_j,w_j)=(u_{j-1},v_{j-1}-2,0).$ 

\medskip
Case D. The last three entries of $\bsi(j-1)$ are $(-,+,+)$ or $(+,-,-)$. One applies Lemma~\ref{lm:concat} twice, the first time with $P_2(x)=x+1$ and the second time with $P_2(x)=x-1$. One sets $(u_j,v_j,w_j)=(u_{j-1}-1,v_{j-1}-1,w_j)$.

\medskip
Case E. The last three entries of $\bsi(j-1)$ are $(-,-,+)$ or $(+,+,-)$.  One applies Lemma~\ref{lm:concat}  with $P_2(x)=x-1$
 and sets $(u_j,v_j,w_j)=(u_{j-1}-1,v_j,w_j)$. 
 
 In Cases C and D, $\bsi(j)$ has two entries and in Case E it has one entry less than $\bsi(j-1)$. 
 
 Further explanations. In the pair $(2,v)$ obtained as the result of this algorithm the first component equals 2 because one encounters exactly once Case D with $(-,+,+)$ or Case E with $(-,-,+)$ (in both of them $u_{j-1}$ decreases by 1); and exactly once  Case D with $(+,-,-)$ or Case E with $(+,-,-)$ (when $u_{j-1}$ also decreases by 1); or Case A with $(1,-)$ or Case B with $(1,+,-)$ or $(1,-,-)$.
 
 When $v=0$ or $v=1,$ one does not have the possibility to apply Lemma~\ref{lm:concat} with $P_2(x)=x+1$ and Cases D and E have to be modified. One has to consider the last 4 entries of $\bsi(j-1)$. If they are $(+,\pm,\mp,-)$ or $(-,\mp,\pm,+)$, then one applies Lemma~\ref{lm:concat} with $P_2(x)=x^3\pm \eps_1 x^2\mp \eps_2 x-1$, where $\eps_i>0$ are small. One sets $(u_j,v_j,w_j)=(u_{j-1}-1,v_{j-1},w_{j-1}-1)$ and $\bsi(j)$ has three entries less than $\bsi(j-1)$. 
 \end{proof}

\begin{proof}[Proof of Theorem~\ref{th:7}]
 The fact that the patterns given in the formulation of  Theorem~\ref{th:7}  are non-realizable
 follows from Proposition \ref{prop:3parts} and Lemma \ref{lm:jens2}.
  It remains to show that all other admissible patterns and
 pairs are realizable.
 
 Using Lemma \ref{lm:concat2} and a Mathematica script \cite{JensSkript} written by the first author, this question is reduced to checking
 the cases: 
 $$(1,+,-,+,+,-,+,+) \quad\text{with} \quad(4,1);\quad  (1,+,+,-,-,-,+,+) \quad\text{with} \quad(0,5);$$
 $$(1,+,+,-,-,+,-,-) \quad\text{with} \quad(3,0);\quad  (1,+,+,+,-,-,-,+) \quad\text{with} \quad(0,5);$$
 $$(1,+,+,+,-,+,+,-) \quad\text{with} \quad(3,0);\quad  (1,+,-,+,-,+,+,-) \quad\text{with} \quad(3,0);$$
 $$(1,+,-,+,+,+,-,-) \quad\text{with} \quad(3,0);\quad\text{and}\quad  (1,+,-,+,+,+,+,-) \quad\text{with} \quad(3,0).$$
 The first five cases can be settled by using either Lemma \ref{lm:concat} or Lemma \ref{lm:concat2}.
 For the realizability of the remaining three cases, we provide the following concrete examples:
 $$P_1(x) = (x - 0.1690) (x - 1.4361) (x - 2.0095) (x^2 + 0.0218 x + 
   6.2846) (x^2 + 3.6029 x + 3.2609),$$
 $$P_2(x) = (x - 2.6713) (x - 2.6087) (x - 0.6059) (x^2 + 0.5495 x + 
   0.3304) (x^2 + 5.3464 x + 7.1668),$$
 $$P_3(x) = (x - 0.6056) (x - 2.6105) (x - 2.6696) (x^2 + 0.5493 x + 
   0.3305) (x^2 + 5.3465 x + 7.1672).$$
\end{proof}

\begin{proof}[Proof of Theorem~\ref{th:8}]
The fact that the patterns given in the formulation of  Theorem~\ref{th:7}  are non-realizable
 follows from Proposition \ref{prop:3parts} and Lemma \ref{lm:jens2} except for the case
 \[
 (1,-,-,-,+,-,-,-,+) \quad \text{with } (0,2) \text{ and } (0,4).
 \]
 Substituting $x \mapsto -x$, we obtain the sign pattern
 $(1,+,-,+,+,+,-,+,+)$. That is $P(x) < P(-x)$ for $x\in \bR_+$.
 In particular, if $P$ has a negative root, 
 then it has at least two positive roots.
 
 Using Lemma~\ref{lm:concat2} and Lemma~\ref{lm:stryk} and the above mentioned Mathematica script, all patterns
 except those of Remark \ref{rem:8} can be shown to be realizable.
\end{proof} 

\begin{proof}[Proof of Theorem~\ref{th:simplyconnectedk}]
We will follow the steps of the proof of Lemma \ref{lm:Jens}.  
For any polynomial $p$, the set $K_p$ consisting of
all exponents $k$ such that there exists a $x_k\in \bR_+$ for which 
\eqref{eqn:lopsidedness} holds,
provides a lower bound on the number of real roots of $p$. 
This lower bound is called the number of 
\emph{lopsided induced zeros} of $p$. 
Fixing an arbitrary set of exponents  $K$, 
let us denote by $S_K$ the set of all polynomials such that  $K\subseteq K_p$. 
It is shown in \cite{F14} that $S_K$ is logarithmically convex.
For example, if $K = \{0,1,\dots,d\}$, then $S_K$ is the set of all sign-independently
real-rooted polynomials.
Consider the family $F_m$ consisting of all exponent sets $K$ such that
the number of lopsided induced zeros of polynomials in $S_K$
is at least $m$. 
The set $S_m = \cup_{K\in F_m} S_K$ is a union of
logarithmically convex sets, whose intersection contains the
set of all sign-independently real-rooted polynomials.
In particular, $S_m$ is path-connected.

As in the proof of Lemma \ref{lm:Jens}, 
 for any polynomial $p$ which has at 
least $m$ real roots, 
all polynomials in the path
\[
\alpha \mapsto p(x) + \frac{x}{\alpha} p'(x), \quad \alpha\in [\infty, \alpha^*]
\]
have at least $m$ real roots. Exactly the same argument as in 
the proof of Lemma \ref{lm:Jens} gives path-connectedness
of the set \eqref{eqn:simplyconnectedset} of polynomials with at least $m$ real roots.

Let us now prove simply connectedness of the set \eqref{eqn:simplyconnectedset} 
by induction on the degree $d$. Consider a closed loop in $Pol_{d,\geq m}^\bsi$, 
i.e., a path $\ell$ given by $\theta \mapsto p_\theta(x)$, $\theta\in[0,1]$, such 
that $p_0(x) = p_1(x)$, and such that $p_\theta(x)$ has at 
least $m$ real roots for all $\theta$.

Consider the induced loop $\ell'$ given by 
$\theta \mapsto p'_\theta(x)$, where we use the notation
$p'_\theta(x) = \frac{d}{dx}p_\theta(x)$.
It is contained in the set $Pol_{d-1,\geq m-1}^{\hat\bsi}$,
where $\hat\bsi$ is obtained from $\bsi$ by deleting
its last entry. 
By the induction hypothesis, the loop $\ell'$ can be contracted 
to a point within the set of all polynomials of degree $d-1$ 
with at least $m-1$ real roots. 
In other words, we have a map $(\theta, \phi) \mapsto p'_{(\theta,\phi)}$,
for $(\theta,\phi)\in [0,1]^2$, satisfying the conditions: 
($i$) $p'_\theta(x) = p'_{(\theta,0)}(x)$; ($ii$) $p'_{(\theta, 1)}$ 
is independent of $\theta$; ($iii$)
$p'_{(\theta,\phi)}$ has at least $m-1$ real 
roots for all $\theta$ and $\phi$. 
The last property implies that $xp'_{(\theta,\phi)}$
has at least $m$ real roots for all $\theta$ and $\phi$.
Define $p_{(\theta, \phi)}$ by the conditions that
$\frac{d}{dx}p_{(\theta, \phi)} = p'_{(\theta, \phi)}$ and
that the constant term of $p_{(\theta, \phi)}$ is independent of $\phi$.

Since the loop $\ell'$ is compact, we can find an 
$\alpha^*\in \bR_+$ such that the polar derivative
\[
p'_{(\theta,\phi,\alpha)}(x) := p_{(\theta, \phi)}(x) + \frac{x}{\alpha} p'_{(\theta, \phi)}(x)
\]
has at least $m$ roots for each $\alpha < \alpha^*$ and all 
$(\theta, \phi) \in [0,1]^2$. 
Thus, similarly to the proof of Lemma \ref{lm:Jens}, the composition
of the maps 
\[
 \alpha \mapsto p'_{(\theta,0,\alpha)}, \quad \alpha\in [\infty, \alpha^*]
 \qquad
 \text{and}
 \qquad
 \phi \mapsto p'_{(\theta,\phi,\alpha^*)}, \quad \phi\in [0, 1]
\]
provides a contraction of the loop $\ell$
in the set $Pol_{d,\geq m}^\bsi$.
\end{proof}


\section{Final Remarks} 

Above we mainly discussed the question  which pairs $(pos,neg)$ of the numbers of positive and negative roots satisfying the obvious compatibility conditions are realized by polynomials with a given sign pattern. Our main Conjecture~\ref{conj:main} presents restrictions observed in consideration of all non-realizable pairs up to degree $10$. However the following  important and closely related questions remained unaddressed above.  

\begin{problem}
Is the set of all polynomials realizing a given pair $(pos,neg)$ and having a sign pattern $\bsi$ path-connected (if non-empty)?  
\end{problem}

Given a real polynomial $p$ of degree $d$ with all non-vanishing coefficients, consider  the sequence of pairs 
$$\{(pos_0(p),neg_0(p)), (pos_1(p),neg_1(p)), (pos_2(p),neg_2(p)), \dots, (pos_{d-1}(p),neg_{d-1}(p))\},$$ where
$(pos_j(p),neg_j(p))$ is the numbers of positive and negative roots of $p^{(j)}$ respectively.  Observe that if one knows the above sequence of pairs then one knows the sign pattern of a polynomial $p$ which is assumed to be monic.  Additionally it is easy to construct examples that the converse fails. 

\begin{problem}
Which sequences of pairs are realizable by real polynomials of degree $d$ with all non-vanishing coefficients? 
\end{problem}
Notice that similar problem for the sequence of pairs of real roots (without division into positive and negative) was considered in \cite{Ko}. 

\medskip
Our final question is as follows. 

\begin{problem}
Is the set of all polynomials realizing a given sequence of pairs  as above path-connected (if non-empty)?  
\end{problem}

\end{document}